\date{}
\newlength{\defbaselineskip}
\newcommand{\setlinespacing}[1]%
           {\setlength{\baselineskip}{#1 \defbaselineskip}}
\newcommand{\N}{{\mathbb{N}}}
\newcommand{\actaqed}{\hfill $\actabox$}
{\medskip\noindent \textit{Proof of #1. }}%
{\actaqed \medskip}
\def\cA{{\mathcal A}}
\def\C{{\mathcal C}}
\def\cC{{\mathcal C}}
\def \Tr{\mathcal T}
\def \cK{\mathcal K}
\def \cV{\mathcal V}
\def \E{\mathcal E}
\def \cE{\mathcal E}
\def \bbE{\mathbb E}
\def\R{{\mathbb R}}
\def\Z{\mathbb Z}
\def \T{\mathbb T}
\def\bE{\mathbb E}
\def \<{\langle}
\def\>{\rangle}
\def \La{\Lambda}
\def \e{\varepsilon}
\def\la{\lambda}
\def\ba{\mathbf a}
\def\bx{\mathbf x}
\def\by{\mathbf y}
\def\bz{\mathbf z}
\def\bu{\mathbf u}
\def\bv{\mathbf v}
\def\bw{\mathbf w}
\def\bp{\mathbf p}
\def\bs{\mathbf s}
\def\btt{\mathbf t}
\def\bW{\mathbf W}
\def\bH{\mathbf H}
\def\bE{\mathbf E}
\newtheorem{Theorem}{Theorem}[section]
\newtheorem{Lemma}{Lemma}[section]
\newtheorem{Proposition}{Proposition}[section]
\newtheorem{Corollary}{Corollary}[section]
\numberwithin{equation}{section}
\newcommand{\be}{\begin{equation}}
\newcommand{\ee}{\end{equation}}
\begin{document}

\title{ Sampling discretization error of integral norms for function classes with small smoothness}
\author{  V.N. Temlyakov\thanks{University of South Carolina, Steklov Institute of Mathematics, and Lomonosov Moscow State University.
Email:  temlyakovv@gmail.com }}
\maketitle
\begin{abstract}
{We consider infinitely dimensional classes of functions and instead of the relative error setting, which was used in previous papers on the integral norm discretization, we consider the absolute error setting. We demonstrate how known results from two areas of research -- supervised learning theory and numerical integration -- can be used in sampling discretization of the square norm on different function classes. We prove a general result, which shows that the sequence  of entropy numbers of a function class in the uniform norm dominates, in a certain sense, the sequence of errors of sampling discretization of the square norm of this class. Then we use this result for establishing new error bounds for sampling discretization of the square norm on classes of multivariate functions with mixed smoothness. }
\end{abstract}

\section{Introduction}

This paper is devoted to a study of discretization of the $L_2$ norm of continuous functions.
It is a follow up to the paper \cite{VT171}. 
Recently, a systematic study of the problem of discretization of the $L_q$ norms of elements of finite dimensional subspaces has begun. The reader can find a discussion of these results in 
the surveys \cite{DPTT} and \cite{KKLT}.
 There are different ways to discretize: use coefficients from an expansion with respect to a basis or, more generally, use 
linear functionals. We discuss here the way which uses function values at a fixed finite set of points. We call this way of discretization {\it sampling discretization}. 
 An important ingredient of this paper is that (alike the paper \cite{VT171}) we consider infinitely dimensional classes of functions and instead of the relative error setting, which was considered in  \cite{VT158} and \cite{VT159},  we consider the absolute error setting. In this section we formulate main results
 of the paper. We present a discussion of some new results in Section \ref{Disc}. 

{\bf Sampling discretization with absolute error.} Let $W\subset L_q(\Omega,\mu)$, $1\le q<\infty$, be a class of continuous on $\Omega$ functions. We are interested in estimating 
the following optimal errors of discretization of the $L_q$ norm of functions from $W$
$$
er_m(W,L_q):= \inf_{\xi^1,\dots,\xi^m} \sup_{f\in W} \left|\|f\|_q^q - \frac{1}{m}\sum_{j=1}^m |f(\xi^j)|^q\right|,
$$
$$
er_m^o(W,L_q):= \inf_{\xi^1,\dots,\xi^m;\la_1,\dots,\la_m} \sup_{f\in W} \left|\|f\|_q^q - \sum_{j=1}^m \la_j |f(\xi^j)|^q\right|.
$$

In this paper we only discuss in detail the case $q=2$. For this reason, in case $q=2$ we drop 
$L_q$ from notation: $er_m(W):=er_m(W,L_2)$ and  $er^o_m(W):=er^o_m(W,L_2)$. 

We demonstrate how known results from two areas of research -- supervised learning theory and numerical integration -- can be used in sampling discretization. 
We now formulate some typical results obtained in the paper. In Section \ref{B}  we prove the following result (see Section \ref{B} for the definition of the entropy numbers).

\begin{Theorem}\label{IT1} Assume that a class of real functions $W$ is such that for all $f\in W$ we have $\|f\|_\infty \le M$ with some constant $M$. Also assume that the entropy numbers of $W$ in the uniform norm $L_\infty$ satisfy the condition
$$
  \e_n(W) \le n^{-r} (\log (n+1))^b,\qquad r\in (0,1/2),\quad b\ge 0,\quad n\in \N.
$$
Then
$$
er_m(W):= er_m(W,L_2)  \le C(M,r,b)m^{-r} (\log (m+1))^b,\quad m\in \N.
$$
\end{Theorem}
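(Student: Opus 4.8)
The plan is to reduce the discretization problem to a combination of a concentration/empirical-process estimate (the ``supervised learning theory'' input) and a chaining argument controlled by the entropy numbers. Write $F_f := |f|^2$ for $f \in W$, so that $\|f\|_2^2 = \int_\Omega F_f\, d\mu$ and $\frac1m\sum_{j=1}^m |f(\xi^j)|^2 = \frac1m\sum_{j=1}^m F_f(\xi^j)$. Since $\|f\|_\infty \le M$, the functions $F_f$ are uniformly bounded by $M^2$, and because $|\,|a|^2-|b|^2\,| = |a-b|\,|a+b| \le 2M|a-b|$, the map $f \mapsto F_f$ is Lipschitz from $(W,\|\cdot\|_\infty)$ into $(\{F_f\},\|\cdot\|_\infty)$ with constant $2M$; hence $\e_n(\{F_f : f\in W\}) \le 2M\,\e_n(W)$, so the class $\mathcal G := \{F_f : f\in W\}$ inherits the entropy bound $\e_n(\mathcal G) \le 2M\, n^{-r}(\log(n+1))^b$.

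Next I would fix $m$ and choose the sample points $\xi^1,\dots,\xi^m$ at random, i.i.d.\ according to $\mu$, and estimate
$$
\bbE\, \sup_{f\in W}\Bigl| \int_\Omega F_f\, d\mu - \frac1m\sum_{j=1}^m F_f(\xi^j)\Bigr|
$$
by the standard symmetrization inequality, bounding it by twice the expected Rademacher average of $\mathcal G$, and then applying Dudley's entropy integral. The empirical $L_2(\mu_m)$-metric on $\mathcal G$ is dominated by the uniform metric, so the entropy integral is controlled by
$$
\int_0^{c} \sqrt{\log N(\mathcal G, \|\cdot\|_\infty, u)}\; du,
$$
and the hypothesis $\e_n(\mathcal G) \lesssim n^{-r}(\log(n+1))^b$ translates into $\log N(\mathcal G,\|\cdot\|_\infty,u) \lesssim u^{-1/r}(\log(1/u))^{b/r}$. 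Since $r < 1/2$, we have $1/r > 2$, so this entropy integral diverges at $0$; the remedy is the usual one — truncate the chaining at a level $\delta$ and bound the tail-of-the-sum contribution directly. Optimizing the split over $\delta$, combined with the $m^{-1/2}$ normalization coming from the Rademacher averages, yields a bound of order $m^{-r}(\log m)^b$ (the exponent $r$ rather than $1/2$ being exactly the effect of $r<1/2$). Taking expectation shows there exists a deterministic choice of $m$ points achieving $er_m(W) \le C(M,r,b)\, m^{-r}(\log(m+1))^b$.

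The main obstacle — and the place where care is needed — is the entropy-integral estimate when $r<1/2$: the chaining integral $\int_0 \sqrt{\log N(u)}\,du$ no longer converges, so one cannot simply quote Dudley's theorem. The right tool is a finite-level chaining bound of the form $\bbE \sup \lesssim \sum_{k\le K} 2^{-k}\sqrt{\log N(2^{-k})} + (\text{diameter of residual cells at level } K)$, where $K$ is chosen (as a function of $m$) to balance the chaining sum against the residual term; with $\log N(u)\sim u^{-1/r}$ one finds the chaining sum is dominated by its last term, of order $2^{-K}\cdot 2^{K/(2r)} = 2^{K(1/(2r)-1)}$, which must be balanced against $m^{-1/2}$-type gains — choosing $2^{-K} \sim m^{-r}$ does the job and produces the stated rate, with the logarithmic factor $(\log(m+1))^b$ surviving the optimization. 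An alternative route to the same estimate, avoiding Rademacher averages, is to invoke directly a quantitative result from supervised learning theory (a uniform law of large numbers with rate governed by the $L_\infty$-covering numbers of the loss class) applied to the square-loss class $\mathcal G$; either way the crux is the same balancing of an entropy term against the sampling cardinality $m$. Finally, since $er^o_m(W) \le er_m(W)$ trivially (equal weights $\la_j = 1/m$ is an admissible choice), the same bound holds a fortiori for the optimal-weights quantity, though only $er_m(W)$ is asserted in the statement.
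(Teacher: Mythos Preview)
Your proposal is correct and, at the level of the underlying mathematics, coincides with the paper's argument: random i.i.d.\ sampling plus a chaining bound in the uniform metric, with the truncation/balancing step forced by $r<1/2$, yielding the rate $m^{-r}(\log(m+1))^b$ and hence a deterministic good point set. The only difference is packaging: the paper does not run the symmetrization/Dudley computation itself but instead invokes a ready-made learning-theory tail estimate (Theorem~\ref{BT3}, due to Konyagin--Temlyakov) for the defect function $L_{\bz}(f)=\int f^2\,d\mu-\tfrac1m\sum_j f(\xi^j)^2$, obtained by specializing the regression setup to $Y=\{0\}$; the quantity $S_J=\sum_{j\le J}2^{(j+1)/2}\e_{2^{j-1}}$ there is exactly your dyadic chaining sum, and the paper's remaining work (Corollary~\ref{BC3} and Lemma~\ref{BL1}) is precisely the balancing you sketch for the logarithmic factor. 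Your ``alternative route'' via a quantitative learning-theory result is literally what the paper does.
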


In the case $b=0$ Theorem \ref{IT1} was proved in \cite{VT171}.  Theorem \ref{IT1} is a rather general theorem, which connects the behavior of absolute errors of discretization with the rate of decay of the entropy numbers. This theorem is derived in Section \ref{B} from known results in supervised learning theory. It is well understood in learning theory (see, for example, \cite{VTbook}, Ch.4) that the entropy numbers of the class 
of priors (regression functions) is the right characteristic in studying the regression problem. 

For the reader's convenience we write $a_m\ll b_m$ instead of $a_m\le Cb_m$, where $C$    is  positive constant independent of $m$. In case $a_m\ll b_m$ and $b_m\ll a_m$ we write $a_m\asymp b_m$.

In Section \ref{C} we apply Theorem \ref{IT1} to classes $\bW^r_p$ and $\bH^r_p$ of multivariate functions with small smoothness (see Section \ref{C} for their definition). We prove there the following two upper bounds.

\begin{Theorem}\label{CT5} Let $d\ge 2$, $2<p\le\infty$, and $1/p<r<1/2$. Then
$$
er_m(\bW^r_p,L_2) \ll m^{-r} (\log m)^{(d-1)(1-r)+r}.
$$
\end{Theorem}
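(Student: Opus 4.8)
The plan is to derive Theorem~\ref{CT5} directly from Theorem~\ref{IT1}, so that the whole task reduces to verifying, for $W=\bW^r_p$ on $\Td$, the two hypotheses of that theorem: uniform boundedness and a bound on the entropy numbers $\e_n(\bW^r_p,L_\infty)$ of the right shape. The boundedness is the cheap part. Since $r>1/p$, the Sobolev-type embedding for mixed smoothness gives $\bW^r_p\hookrightarrow C(\Td)$, hence $\|f\|_\infty\le M(d,p,r)$ for all $f$ in the (unit-ball) class $\bW^r_p$; and $\bW^r_p$ consists of real functions, as required. Thus Theorem~\ref{IT1} becomes applicable the moment the correct entropy estimate is in place, and $er_m$ will inherit exactly the exponents of $\e_n$.

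The heart of the matter is therefore the estimate
$$
\e_n(\bW^r_p,L_\infty)\ \ll\ n^{-r}\bigl(\log(n+1)\bigr)^{(d-1)(1-r)+r},\qquad n\in\N,
$$
valid precisely in the regime $d\ge 2$, $2<p\le\infty$, $1/p<r<1/2$. If it is not available as a direct citation from the literature on entropy numbers of mixed-smoothness classes, I would prove it by the standard hyperbolic-cross scheme: write $f\in\bW^r_p$ via its dyadic-block decomposition, approximate $f$ by the partial sum over a hyperbolic cross of order $N$ — a trigonometric-polynomial subspace of dimension $D_N\asymp 2^N N^{d-1}$ — estimate the discarded tail in $L_\infty$ by applying the Nikol'skii $L_p\to L_\infty$ inequalities block by block (this is where $p>2$ enters, and it is the small-smoothness condition $r<1/2$ that forces the sum over dyadic scales to be dominated by the finest scales, producing the exponent $1-r$ rather than the "large-smoothness" exponent), and estimate the entropy numbers of the ball of the finite-dimensional cross-polynomial space in the $L_\infty$ metric by the known volumetric/Maurey-type bounds. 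Optimizing $N$ against $n$ then yields the displayed estimate, the power $(d-1)(1-r)+r$ arising from combining the $(d-1)$-type logarithmic factor of the cross with the correction of the $L_p\to L_\infty$ passage on each block.

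With this entropy bound in hand, apply Theorem~\ref{IT1} with $b=(d-1)(1-r)+r$: the requirement $r\in(0,1/2)$ holds since $1/p<r<1/2$, and $b\ge 0$ since $d\ge 2$ and $0<r<1$. The conclusion is
$$
er_m(\bW^r_p,L_2)\ \le\ C(d,p,r)\,m^{-r}\bigl(\log(m+1)\bigr)^{(d-1)(1-r)+r}\ \ll\ m^{-r}(\log m)^{(d-1)(1-r)+r}.
$$
The main obstacle is the $L_\infty$ entropy estimate in the small-smoothness range: unlike the classical large-smoothness case, the logarithmic exponent depends on both $d$ and $r$, and pinning down the precise power $(d-1)(1-r)+r$ requires careful accounting of the logarithmic losses in the Nikol'skii inequalities across dyadic scales together with the sharp finite-dimensional entropy of hyperbolic-cross polynomial balls in $L_\infty$; the remaining passage to $er_m$ is a mechanical invocation of Theorem~\ref{IT1}.
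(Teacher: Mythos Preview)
Your proposal is correct and follows the same high-level route as the paper: verify the uniform boundedness $\|f\|_\infty\le M$ from the embedding $\bW^r_p\hookrightarrow C(\T^d)$ for $r>1/p$, establish the entropy bound $\e_n(\bW^r_p,L_\infty)\ll n^{-r}(\log(n+1))^{(d-1)(1-r)+r}$, and then invoke Theorem~\ref{IT1} with $b=(d-1)(1-r)+r$.

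The one noteworthy difference is how the entropy bound is obtained. You sketch a direct hyperbolic-cross argument (block decomposition, Nikol'skii $L_p\to L_\infty$, finite-dimensional entropy of cross-polynomial balls). The paper instead cites the Kolmogorov-width bound $d_m(\bW^r_p,L_\infty)\ll m^{-r}(\log m)^{(d-1)(1-r)+r}$ from \cite{VT184} and then applies Carl's inequality~(\ref{Carl}) to convert widths into entropy numbers (this is Theorem~\ref{CT3}). The paper's route is shorter and more modular --- it offloads all the small-smoothness analysis to an existing width result and uses a general black-box transfer --- whereas your direct approach would in effect reprove much of \cite{VT184}. Either way, once the entropy estimate is in hand the passage to $er_m$ via Theorem~\ref{IT1} is exactly as you describe.
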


\begin{Theorem}\label{CT6} Let $d\ge 2$, $2<p\le\infty$, and $1/p<r<1/2$. Then
$$
er_m(\bH^r_p,L_2) \ll m^{-r} (\log m)^{d-1+r}.
$$
\end{Theorem}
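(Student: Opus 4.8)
The plan is to obtain Theorem \ref{CT6} by applying Theorem \ref{IT1} to (a scalar multiple of) the class $W=\bH^r_p$. Two things must be checked: that $\bH^r_p$ is uniformly bounded in $L_\infty$, and that its entropy numbers in $L_\infty$ satisfy $\e_n(\bH^r_p)_\infty\ll n^{-r}(\log(n+1))^{d-1+r}$. The hypotheses of the theorem are exactly tailored to this. The assumption $r>1/p$ is the threshold for the embedding $\bH^r_p(\Td)\hookrightarrow C(\Td)$: for $f\in\bH^r_p$ the Nikolskii inequality gives $\|\delta_s(f)\|_\infty\ll 2^{|s|_1/p}\|\delta_s(f)\|_p\ll 2^{-(r-1/p)|s|_1}$ for each dyadic block $s\in\Z_+^d$, and summing over $s$ converges since $r>1/p$, so $\|f\|_\infty\le M=M(d,p,r)$ for all $f\in\bH^r_p$. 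The assumption $r<1/2$ places $r$ in the admissible interval $(0,1/2)$ required by Theorem \ref{IT1}, and $b=d-1+r\ge 1+r>0$ is admissible as well.

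The crux is therefore the entropy estimate $\e_n(\bH^r_p)_\infty\ll n^{-r}(\log(n+1))^{d-1+r}$ in the small-smoothness range $2<p\le\infty$, $1/p<r<1/2$. I would establish it --- or invoke it, if the needed case is already available in the literature on entropy numbers of mixed-smoothness classes --- through the standard blockwise scheme. Write $f=\sum_s\delta_s(f)$ with $\|\delta_s(f)\|_p\le 2^{-r|s|_1}$, where $\delta_s(f)$ is a trigonometric polynomial supported on a dyadic box with $D_s\asymp 2^{|s|_1}$ frequencies. Using Marcinkiewicz-type discretization, identify on each block the set of polynomials $T$ with $\|T\|_p\le 2^{-r|s|_1}$ with a dilate of the finite-dimensional ball $B_p^{D_s}$ and the $L_\infty$ norm with $\ell_\infty^{D_s}$; then $\e_k$ of the $s$-th block is of order $2^{-r|s|_1}2^{|s|_1/p}\,\e_k(B_p^{D_s}\hookrightarrow\ell_\infty^{D_s})$. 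Cover $\bH^r_p$ by products of block-covers and optimize the allocation $\sum_s k_s=n$ of the total budget. The decisive point, where $p>2$ enters, is that the entropy numbers of $\ell_p^N\hookrightarrow\ell_\infty^N$ are, in the relevant range of the index, of order $(k^{-1}\log(eN/k))^{1/p}$ --- substantially smaller than in the case $p=\infty$ --- and the restrictions $1/p<r<1/2$ are precisely what make the resulting sums over the blocks, and over the levels $|s|_1=l$ (of which there are $\asymp l^{d-1}$), collapse to the logarithmic exponent $d-1+r$.

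With the two hypotheses in hand, I would finish as follows: replace $\bH^r_p$ by $c\,\bH^r_p$ for a constant $c=c(d,p,r)$ so that the entropy numbers, which are homogeneous of degree one, obey the normalized bound $\e_n(c\,\bH^r_p)_\infty\le n^{-r}(\log(n+1))^{d-1+r}$; apply Theorem \ref{IT1} with $b=d-1+r$ to $c\,\bH^r_p$; and undo the scaling using that $er_m$ is quadratically homogeneous in the class, $er_m(c\,W,L_2)=c^2\,er_m(W,L_2)$. This gives $er_m(\bH^r_p,L_2)\le C(d,p,r)\,m^{-r}(\log(m+1))^{d-1+r}$, which is the asserted bound $er_m(\bH^r_p,L_2)\ll m^{-r}(\log m)^{d-1+r}$.

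I expect the main obstacle to be the entropy estimate of the second paragraph. In the small-smoothness regime the blockwise optimization is genuinely delicate: a crude blockwise application of the Nikolskii inequality only produces the weaker rate $m^{-(r-1/p)}$, and one must exploit the improved finite-dimensional entropy numbers $\e_k(\ell_p^N\hookrightarrow\ell_\infty^N)$ available for $p>2$ in order to recover the full rate $m^{-r}$ together with the correct power of the logarithm. Once that estimate is in place, the deduction of Theorem \ref{CT6} from Theorem \ref{IT1} is routine.
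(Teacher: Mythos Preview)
Your proposal is correct and follows essentially the same high-level strategy as the paper: verify the uniform bound $\|f\|_\infty\le M(d,p,r)$ for $f\in\bH^r_p$ when $r>1/p$, establish the entropy estimate $\e_n(\bH^r_p,L_\infty)\ll n^{-r}(\log(n+1))^{d-1+r}$, and then apply Theorem~\ref{IT1} with $b=d-1+r$.

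The one point of departure is how the entropy estimate is obtained. The paper does not carry out the blockwise optimization you sketch; instead it imports the Kolmogorov width bound $d_m(\bH^r_p,L_\infty)\ll m^{-r}(\log m)^{d-1+r}$ from \cite{VT184} (stated as Theorem~\ref{CT2}) and then invokes Carl's inequality~(\ref{Carl}) to pass from widths to entropy numbers, arriving at Theorem~\ref{CT4}. Your direct route via Sch\"utt-type bounds for $\e_k(B_p^N\hookrightarrow\ell_\infty^N)$ with $p>2$ is precisely the machinery underlying the cited width result, so the two arguments ultimately rest on the same finite-dimensional input; the paper simply packages that step as a citation plus Carl's inequality, which is shorter but less self-contained.
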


In Section \ref{E} we discuss a connection between characteristics $er_m^o(W)$, $er_m(W)$ and errors of numerical integration of functions from the class $W$. In the paper \cite{VT171} 
it was established that quasi-algebra property of the class $W$ allows us to obtain an upper estimates for the $er_m^o(W)$ and $er_m(W)$ in terms of errors of numerical integration of functions from the class $W$ (see Proposition \ref{EP1} in Section \ref{E}). In Section \ref{F} we 
prove that the classes $\bH^r$ (see the definition in Section \ref{C}) have the quasi-algebra property and therefore we can apply the technique from \cite{VT171}. For example, we prove in 
Section \ref{F} the following theorem.

\begin{Theorem}\label{FT1} Let $1\le p\le \infty$ and $r>1/p$. Then
$$
  er_m^o(\bH^r_p,L_2) \asymp m^{-r} (\log m)^{d-1}.
$$
\end{Theorem}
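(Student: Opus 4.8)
The plan is to reduce Theorem~\ref{FT1} to the classical sharp asymptotics for numerical integration on mixed‑smoothness classes: for $1\le p\le\infty$ and $r>1/p$ one has
$$
\kappa_m(\bH^r_p):=\inf_{\xi^1,\dots,\xi^m;\la_1,\dots,\la_m}\ \sup_{f\in\bH^r_p}\ \Bigl|\int_\Omega f\,d\mu-\sum_{j=1}^m\la_j f(\xi^j)\Bigr|\ \asymp\ m^{-r}(\log m)^{d-1},
$$
the upper bound coming from Frolov‑ or Smolyak‑type cubature formulas and the matching lower bound from the standard counting argument. Since $r>1/p$, every $f\in\bH^r_p$ is continuous, so the point values $f(\xi^j)$ make sense, and, the class being real, $\|f\|_2^2=\int_\Omega f^2\,d\mu$. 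It therefore suffices to prove $er_m^o(\bH^r_p,L_2)\asymp\kappa_m(\bH^r_p)$, which I split into two halves.

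For the upper bound I would argue exactly as announced before the statement. By the quasi‑algebra property of the classes $\bH^r$ established in Section~\ref{F} — which, since $r>1/p$, yields in particular a constant $C=C(d,r,p)$ with $f^2\in C\,\bH^r_p$ whenever $f\in\bH^r_p$ — applying a near‑optimal cubature formula for $\bH^r_p$ we get, for all $f\in\bH^r_p$,
$$
\Bigl|\,\|f\|_2^2-\sum_{j=1}^m\la_j|f(\xi^j)|^2\,\Bigr|=\Bigl|\int_\Omega f^2\,d\mu-\sum_{j=1}^m\la_j f^2(\xi^j)\Bigr|\le C\,\kappa_m(\bH^r_p),
$$
so that, taking the infimum over cubature formulas, $er_m^o(\bH^r_p,L_2)\ll\kappa_m(\bH^r_p)$. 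This is precisely Proposition~\ref{EP1}, i.e.\ the technique of \cite{VT171}, applied to $W=\bH^r_p$.

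For the lower bound I would use a symmetrization that removes the quadratic nuisance terms. Fix any cubature $(\xi^1,\dots,\xi^m;\la_1,\dots,\la_m)$, write $\La:=\sum_{j=1}^m\la_j$ and $N(g):=\int_\Omega g\,d\mu-\sum_{j=1}^m\la_j g(\xi^j)$, and for $g\in\bH^r_p$ put $f_\pm:=C_1^{-1}(1\pm g)$ with $C_1:=\|1\|_{\bH^r_p}+1$. Since $\bH^r_p$ is the unit ball of a norm that obeys the triangle inequality and whose difference seminorms annihilate constants, $f_\pm\in\bH^r_p$. Expanding the squares,
$$
\|f_\pm\|_2^2-\sum_{j=1}^m\la_j|f_\pm(\xi^j)|^2=C_1^{-2}\Bigl[(1-\La)\pm 2N(g)+\Bigl(\int_\Omega g^2\,d\mu-\sum_{j=1}^m\la_j g^2(\xi^j)\Bigr)\Bigr];
$$
the first and third bracketed terms are even in $g$, hence cancel in the difference of the two expressions, which therefore equals $4C_1^{-2}N(g)$. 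Consequently
$$
\sup_{f\in\bH^r_p}\Bigl|\,\|f\|_2^2-\sum_{j=1}^m\la_j|f(\xi^j)|^2\,\Bigr|\ \ge\ 2C_1^{-2}|N(g)|,
$$
and taking the supremum over $g\in\bH^r_p$ (which makes the right‑hand side $\ge 2C_1^{-2}$ times the numerical‑integration error of this cubature, hence $\ge 2C_1^{-2}\kappa_m(\bH^r_p)$) and then the infimum over cubature formulas gives $er_m^o(\bH^r_p,L_2)\ge 2C_1^{-2}\kappa_m(\bH^r_p)$. Together with the upper bound this yields $er_m^o(\bH^r_p,L_2)\asymp\kappa_m(\bH^r_p)\asymp m^{-r}(\log m)^{d-1}$.

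The hard part is not this assembly but the ingredient it rests on — the quasi‑algebra property carried out in Section~\ref{F}, where the hypothesis $r>1/p$ is used both for boundedness of the factors and to make a Leibniz‑type estimate for mixed differences close up — together with the classical two‑sided bound for $\kappa_m(\bH^r_p)$. The symmetrization is the one genuinely new step; its moral is that $er_m^o$ is, up to a constant, the numerical‑integration error of the same class, which is exactly why one recovers the sharp logarithmic power $(\log m)^{d-1}$, in contrast with the weaker estimate $er_m(\bH^r_p,L_2)\ll m^{-r}(\log m)^{d-1+r}$ from Theorem~\ref{CT6} for the equal‑weight quantity.
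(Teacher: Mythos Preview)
Your proof is correct and follows essentially the same route as the paper: the upper bound is exactly the paper's application of Proposition~\ref{EP1} together with the quasi-algebra property (Proposition~\ref{FP1}) and the known asymptotics~(\ref{F4}) for $\kappa_m(\bH^r_p)$, while your symmetrization with $f_\pm=(1\pm g)/2$ is precisely the argument behind Theorem~\ref{ET1} (Property~A for $\bH^r_p$), which the paper simply cites via Theorem~\ref{ET2} rather than re-deriving. The only quibble is that you describe the symmetrization as ``the one genuinely new step'' --- it is in fact the content of Theorem~\ref{ET1} from \cite{VT171}, already packaged in Theorem~\ref{ET2}.
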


We note that the upper bound in Theorem \ref{FT1} is better than the one in Theorem \ref{CT6}. 
However, Theorem \ref{FT1} gives a bound for the $er_m^o(\bH^r_p,L_2)$ while Theorem \ref{CT6} gives a bound for the $er_m(\bH^r_p,L_2)$. We do not know if in Theorem \ref{FT1} 
the quantity $er_m^o(\bH^r_p,L_2)$ can be replaced by $er_m(\bH^r_p,L_2)$ in the case $d\ge 3$. We only know that this can be done in a special case of $p=\infty$ and $0<r<1$ (see Theorem \ref{FT2} below). For the case $d=2$ see Theorem \ref{FT3} below.

\section{Proof of Theorem \ref{IT1}}
\label{B}

In our further discussion we are interested in discretization of the $L_2$ norm of real functions from a given function class $W$. It turns out that 
this problem is closely related to some problems from supervised learning theory. We give a brief introduction to these problems. This is a vast area of research with a wide range of different settings. In this subsection we only discuss a development of a setting from \cite{CS} (see \cite{VTbook}, Ch.4, for detailed discussion). 

Let $X\subset\R^d$, $Y\subset \R$ be Borel sets, $\rho$ be a Borel probability measure on a Borel set $Z\subset X\times Y$.  For $f:X\to Y$ define {\it the error} 
$$
\cE(f)  :=\int_Z(f(\bx)-y)^2 d\rho.
$$
 
Let   $\rho_X$  be the marginal probability  measure of $\rho$ on $X$, i.e.,  $\rho_X (S) = \rho(S\times Y)  $ for Borel sets $S\subset X$. Define
$$
f_\rho(\bx) := \bbE(y|\bx)
$$
to be a conditional expectation of $y$.
The function $f_\rho$ is known in statistics as the {\it regression function} of $\rho$. It is clear that if $f_\rho\in L_2(\rho_X)$ then it minimizes the error $\cE(f)$ over all $f\in L_2(\rho_X)$ such that $\cE(f_\rho)\le \cE(f)$. Thus, in the sense of error $\cE(\cdot)$ the regression function $f_\rho$ is the best to describe the relation between inputs $\bx\in X$ and outputs $y\in Y$. The goal is to find an estimator $f_\bz$, on the base of given data $\bz:=((\bx^1,y_1),\dots,(\bx^m,y_m))$ that approximates $f_\rho$  well with high probability. We assume that $(\bx^i,y_i)$, $i=1,\dots,m$ are independent and distributed according to $\rho$.   We measure the error between $f_\bz$ and $f_\rho$ in the $L_2(\rho_X)$ norm. 

We note that a standard setting in the distribution-free theory of regression (see \cite{GKKW}) involves the expectation $\bbE(\|f_\rho- f_\bz\|_{L_2(\rho_X)}^2)$ as a measure of quality of an estimator. An important new feature of  the setting in learning theory formulated in \cite{CS} (see \cite{VTbook} for detailed discussion) is the following.  They propose to 
study systematically the probability distribution function
$$
\rho^m\{\bz:\|f_\rho-f_\bz\|_{L_2(\rho_X)}\ge\eta\}
$$
instead of the expectation. 

For a compact subset $\Theta$ of a Banach space $B$ we define the entropy numbers as follows
$$
\e_n(\Theta,B) := \inf\{\e: \exists f_1,\dots,f_{2^n}\in \Theta: \Theta\subset \cup_{j=1}^{2^n} (f_j+\e U(B))\}
$$
where $U(B)$ is the unit ball of a Banach space $B$. 
  
In   this subsection we always assume that the measure $\rho$ is concentrated on a bounded with respect to $y$ set, i.e. the set $Z$ satisfies the condition $Z\subset X\times [-M,M]$ (or a little weaker $|y|\le M$ a.e. with respect to $\rho_X$, i.e the $\rho_X$-measure of those $\bx$, for which there exists a $y$ such that $(\bx,y)\in Z$ and $|y|>M$ is equal to zero) with some fixed $M$. Then it is clear that for $f_\rho$ we have $|f_\rho(\bx)|\le M$ for all $\bx$ (for almost all $\bx$). Therefore, it is natural to assume that a class $\Theta$ of priors where $f_\rho$ belongs is embedded into the $\C(X)$-ball ($L_\infty$-ball) of radius $M$.  

We define the {\it empirical error} of $f$ as
$$
\E_\bz(f):= \frac{1}{m}\sum_{i=1}^m(f(\bx^i)-y_i)^2.
$$
Let $f\in L_2(\rho_X)$. The {\it defect function} of $f$ is
$$
L_\bz(f) := L_{\bz,\rho}(f) := \E(f)-\E_\bz(f);\quad \bz=(z_1,\dots,z_m),\quad z_i=(\bx^i,y_i).
$$
We are interested in estimating $L_\bz(f)$ for functions $f$ coming from a given class $W$. 
We begin with the case $B$ being 
 $\C(X)$, the space of functions continuous on a compact subset $X$ of $\R^d$ with the norm
$$
\|f\|_\infty:= \sup_{\bx\in X}|f(\bx)|.
$$
  We use the abbreviated notation
$$
  \e_n(W):= \e_n(W,\C).
$$

Settings for the supervised learning problem and the discretization problem are different. 
In the supervised learning problem we are given a sample $\bz$ and we want to approximately recover the regression function $f_\rho$. It is important that we do not know 
$\rho$. We only assume that we know that $f_\rho\in \Theta$. In the discretization of the $L_q$, $1\le q<\infty$, norm we assume that $f\in W$ and the probability measure $\mu$ is known. We want to find a discretization set $\xi=\{\bx^j\}_{j=1}^m$, which is good for the whole class $W$. However, the technique, based on the defect function, for solving the supervised learning problem can be used for solving the discretization problem. We now explain this in detail. Let us consider a given function class $W$ of real functions, defined on $X=\Omega$.
Suppose that the probability measure $\rho$ is such that $\rho_X=\mu$ and for all $\bx \in X$ 
we have $y=0$. In other words, we assume that $Y=\{0\}$. Then for the defect function we have
$$
L_\bz(f) =\int_X f^2d\mu -\frac{1}{m}\sum_{j=1}^m f(\bx^j)^2 =: L^2_{(\bx^1,\dots,\bx^m)}(f)
$$
and 
$$
\rho^m\{\bz:\sup_{f\in W} |L_\bz(f)|\ge \eta\} = \mu^m\{\bw:\sup_{f\in W} |L_\bw^2(f)|\ge \eta\}.
$$
Moreover, condition (\ref{B6}) (see below) is satisfied with $M$ such that for all $f\in W$ we have $\|f\|_\infty \le M$. The above argument shows that we can derive results on discretization of the $L_2$ norm directly from the corresponding results from learning theory. We assume that $W$ satisfies the following condition:
\be\label{B8}
f\in W\quad \Rightarrow \quad \|f\|_\infty \le M.
\ee

Our proof is based on the following known result. The following Theorem \ref{BT3} and Corollary   \ref{BC2} are from \cite{VT98} (see also \cite{VTbook}, section 4.3.3, p.213).   We assume that $\rho$ and $W$ satisfy the following condition. 
\begin{equation}\label{B6}
\text{For all}\quad f\in W,\quad f:X\to Y\quad \text{and any}\, (\bx,y)\in Z,\quad |f(\bx)-y| \le M.  
\end{equation}
 
\begin{Theorem}\label{BT3} Assume that $\rho$, $W$ satisfy (\ref{B6}) and $W$ is such that
$$
\sum_{n=1}^\infty n^{-1/2}\e_n(W) =\infty.
$$
For $\eta>0$ define $J:=J(\eta/M)$ as the minimal $j$ satisfying $\e_{2^j}\le \eta/(8M)$ and
$$
S_J:= \sum_{j=1}^J2^{(j+1)/2}\e_{2^{j-1}}.
$$
Then for $m$, $\eta$ satisfying $m(\eta/S_J)^2 \ge 480M^2$ we have
$$
\rho^m\{\bz:\sup_{f\in W} |L_\bz(f)|\ge \eta\} \le C(M,\e(W))\exp(-c(M)m(\eta/S_J)^2).
$$
\end{Theorem}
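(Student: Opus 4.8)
The plan is to prove the estimate by a Dudley--type chaining over the dyadic scales $\e_{2^j}(W)$, with a Bernstein/Hoeffding concentration bound at each scale; since every function appearing below lies in $W$, the hypothesis (\ref{B6}) applies throughout. Two basic estimates are needed. First, for a single $f$ with $|f(\bx)-y|\le M$ on $Z$, the independent variables $(f(\bx^i)-y_i)^2$ lie in $[0,M^2]$, have mean $\E(f)$ and variance at most $M^4$, so $L_\bz(f)=\E(f)-\E_\bz(f)$ is a normalized sum of centered bounded variables and Bernstein's inequality gives $\rho^m\{\bz:|L_\bz(f)|\ge t\}\le 2\exp(-c(M)mt^2)$. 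Second, for two such functions $g,h$, factoring $a^2-b^2=(a-b)(a+b)$ yields the deterministic bound $|L_\bz(g)-L_\bz(h)|\le 4M\|g-h\|_\infty$, while the increment variables $(g(\bx^i)-y_i)^2-(h(\bx^i)-y_i)^2$ are independent and bounded by $4M\|g-h\|_\infty$, so Hoeffding's inequality controls $L_\bz(g)-L_\bz(h)$ on the scale $\|g-h\|_\infty$.

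Next I build the chain. Fix nested $\e_{2^j}(W)$-nets $W_0\subset\cdots\subset W_J$ with $|W_j|\le 2^{2^j}$, where $J=J(\eta/M)$ is as in the statement, and for $f\in W$ let $g_j=g_j(f)\in W_j$ be a nearest point, so $\|g_j-g_{j-1}\|_\infty\le 2\e_{2^{j-1}}$ and, by the choice of $J$, $|L_\bz(f)-L_\bz(g_J)|\le 4M\e_{2^J}\le\eta/2$ deterministically. Telescoping $L_\bz(g_J)=L_\bz(g_0)+\sum_{j=1}^J(L_\bz(g_j)-L_\bz(g_{j-1}))$, it suffices to bound the probability that $|L_\bz(g_0)|+\sum_{j=1}^J|L_\bz(g_j)-L_\bz(g_{j-1})|$ exceeds $\eta/2$. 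To that end I allocate to the $j$-th link the threshold $\eta_j=A(\eta/S_J)\e_{2^{j-1}}+B\sqrt{2^{j-1}/m}\,\e_{2^{j-1}}$ (and an analogous $\eta_0$ to the base level). Since $\e_n$ is nonincreasing and $2^{(j+1)/2}\ge1$, one has $\sum_j\e_{2^{j-1}}\le S_J$, so the first parts sum to at most $A\eta$; since $\sum_j 2^{(j+1)/2}\e_{2^{j-1}}=S_J$, the second parts sum to a constant multiple of $Bm^{-1/2}S_J$ --- this is exactly where the weight $2^{(j+1)/2}$ in the definition of $S_J$ enters --- so the hypothesis $m(\eta/S_J)^2\ge 480M^2$ makes $\eta_0+\sum_j\eta_j\le\eta/2$ for suitable $A,B$ depending only on $M$.

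For the $j$-th link, Hoeffding together with a union bound over the at most $2^{2^j}$ chains passing through level $j$ gives a probability at most $2\exp(2^j\log 2-c(M)\,m\eta_j^2/\e_{2^{j-1}}^2)$; since $\eta_j^2/\e_{2^{j-1}}^2\gtrsim(\eta/S_J)^2+2^{j-1}/m$, the part $c(M)2^{j-1}$ of the exponent swallows the cardinality count $2^j\log 2$ (this is what the size of $480$ buys), leaving a residual exponent at least a constant multiple of $m(\eta/S_J)^2$, uniformly in $j$; the base level ($|W_0|\le 2$) is handled the same way. Summing the $J+1$ contributions and adding the deterministic $\eta/2$ from the tail gives
$$
\rho^m\{\bz:\sup_{f\in W}|L_\bz(f)|\ge\eta\}\le C(M,\e(W))\exp(-c(M)m(\eta/S_J)^2),
$$
the dependence of $C$ on the sequence $\e(W)$ entering only through the number $J$ of scales, which can moreover be absorbed into the exponent at the cost of a smaller $c(M)$.

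The main obstacle is the threshold allocation in the chaining step: the $\eta_j$ must be simultaneously summable to $\le\eta/2$, large enough that the scale-$j$ concentration exponent absorbs the doubly-exponential net cardinality $2^{2^j}$, and still large enough to leave a residual exponent of order $m(\eta/S_J)^2$ after that cancellation. Balancing these three demands forces the split of $\eta_j$ into a ``signal'' part $\propto(\eta/S_J)\e_{2^{j-1}}$ and an ``entropy'' part $\propto\sqrt{2^{j-1}/m}\,\e_{2^{j-1}}$, and it is the summation of the latter that makes the weighted sum $S_J=\sum_j 2^{(j+1)/2}\e_{2^{j-1}}$ --- rather than a single entropy number --- the natural quantity in the bound; the constant $480$ is precisely the slack this balancing requires. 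A secondary, routine point is to verify that the intermediate centers $g_j$ may be taken in $W$, so that (\ref{B6}) and the variance estimate of the first step apply to them as well.
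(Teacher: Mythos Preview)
The paper does not prove Theorem~\ref{BT3}; it quotes it verbatim as a known result from \cite{VT98} (see also \cite{VTbook}, Section~4.3.3, p.~213) and then uses it as a black box to derive Corollary~\ref{BC3} and Theorem~\ref{IT1}. So there is no proof in the paper to compare your attempt against.

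That said, your chaining argument is the standard route to such uniform-deviation bounds and is, in outline, the same argument given in the cited references. The key ingredients --- Hoeffding on the increments $(g_j(\bx)-y)^2-(g_{j-1}(\bx)-y)^2$, which are bounded by $4M\e_{2^{j-1}}$ under (\ref{B6}); the telescoping decomposition down to level $J$ so that the residual $|L_\bz(f)-L_\bz(g_J)|\le 4M\e_{2^J}\le\eta/2$; the union bound over the $\le 2^{2^j}\cdot 2^{2^{j-1}}$ links at level $j$; and the two-part threshold $\eta_j$ whose ``entropy'' component is sized so that its sum over $j$ reproduces exactly $S_J/\sqrt{m}$ --- are all correct and correctly identified. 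The observation that the net centers lie in $W$ by the very definition of $\e_n(W)$, so that (\ref{B6}) applies to every $g_j$, is also right.

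One place where your sketch is slightly loose is the base level. With $\eta_0\asymp M^2(\eta/S_J)$ chosen so that the exponent at level $0$ matches $m(\eta/S_J)^2$, the requirement $\eta_0+\sum_j\eta_j\le\eta/2$ forces $M^2/S_J$ to be bounded, which is not guaranteed for all $\eta$. This is exactly where the latitude in the constant $C(M,\e(W))$ is used: since $S_J\ge 2^{J/2}\eta/(8M)$ (from $\e_{2^{J-1}}>\eta/(8M)$), the base contribution can be absorbed once $J$ is moderately large, and for bounded $J$ the bound is handled by adjusting $C$ in terms of the first few $\e_n(W)$. The hypothesis $\sum_n n^{-1/2}\e_n(W)=\infty$ (equivalently $S_J\to\infty$ as $J\to\infty$) singles out the regime in which $S_J$, rather than a fixed constant, governs the rate; the complementary case is treated separately in \cite{VT98}. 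None of this affects the correctness of your overall scheme.
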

 
\begin{Corollary}\label{BC2} Assume $\rho$, $W$ satisfy (\ref{B6}) and 
$\e_n(W)\le Dn^{-r}$, $r\in (0,1/2)$.
Then for $m$, $\eta$ satisfying $m \eta^{1/r} \ge C_1(M,D,r)$ we have
$$
\rho^m\{\bz:\sup_{f\in W} |L_\bz(f)|\ge \eta\} \le C(M,D,r)\exp(-c(M,D,r)m\eta^{1/r} ).
$$
\end{Corollary}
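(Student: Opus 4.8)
The plan is to derive Corollary \ref{BC2} from Theorem \ref{BT3} simply by inserting the power bound $\e_n(W)\le Dn^{-r}$ into the quantities $J$ and $S_J$ and doing the elementary arithmetic; throughout, "constant" will mean a quantity depending only on $M,D,r$.

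First I would estimate $J=J(\eta/M)$, the least $j$ with $\e_{2^j}(W)\le\eta/(8M)$. Since $\e_{2^j}(W)\le D2^{-jr}$, the inequality $D2^{-jr}\le\eta/(8M)$ already forces this, i.e. any $j\ge\frac1r\log_2(8MD/\eta)$ works; hence, in the nontrivial regime $\eta\le 8MD$, one gets $J\le\frac1r\log_2(8MD/\eta)+1$, so that $2^J\le 2(8MD/\eta)^{1/r}$. Next I would bound
\[
S_J=\sum_{j=1}^J 2^{(j+1)/2}\e_{2^{j-1}}(W)\le D2^{1/2+r}\sum_{j=1}^J 2^{j(1/2-r)}\le C(D,r)\,2^{J(1/2-r)},
\]
where the last step is the geometric-series estimate that uses $r<1/2$: the exponent $1/2-r$ is positive, so the sum is comparable to its last term. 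Combining the two displays gives $S_J\le C(M,D,r)\,\eta^{-(1/2-r)/r}$, and therefore
\[
\frac{\eta}{S_J}\ \ge\ c(M,D,r)\,\eta^{\,1+(1/2-r)/r}\ =\ c(M,D,r)\,\eta^{1/(2r)},
\]
the exponent simplifying because $1+(1/2-r)/r=\tfrac{1/2}{r}$. Squaring, $m(\eta/S_J)^2\ge c(M,D,r)^2\,m\,\eta^{1/r}$.

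It follows that the hypothesis $m(\eta/S_J)^2\ge 480M^2$ of Theorem \ref{BT3} is met as soon as $m\eta^{1/r}\ge 480M^2/c(M,D,r)^2=:C_1(M,D,r)$, which is precisely the assumption of the Corollary; and then Theorem \ref{BT3} yields
\[
\rho^m\{\bz:\sup_{f\in W}|L_\bz(f)|\ge\eta\}\le C(M,\e(W))\exp\!\big(-c(M)m(\eta/S_J)^2\big)\le C(M,\e(W))\exp\!\big(-c(M)c(M,D,r)^2\,m\eta^{1/r}\big),
\]
which is the asserted bound once one observes that under $\e_n(W)\le Dn^{-r}$ the constant $C(M,\e(W))$ of Theorem \ref{BT3} is controlled by a constant depending only on $M,D,r$. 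I expect no genuine obstacle beyond two bookkeeping points: carrying the exponent algebra through cleanly so that $m(\eta/S_J)^2$ comes out proportional to $m\eta^{1/r}$ — this is exactly where $r<1/2$ is essential — and checking that $C(M,\e(W))$ really is dominated by the power bound on the entropy numbers. One should also keep in mind that Theorem \ref{BT3} carries the side condition $\sum_n n^{-1/2}\e_n(W)=\infty$; if instead this series converges the class is even thinner and the estimate holds a fortiori, so the Corollary remains valid in all cases.
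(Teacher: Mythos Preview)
Your proposal is correct and is precisely the argument the paper uses: the paper cites Corollary \ref{BC2} from \cite{VT98} without reproving it, but the proof it gives for the generalization Corollary \ref{BC3} specializes (at $b=0$) to exactly your computation --- bound $J$ via $\e_{2^{J-1}}>\eta/(8M)$, sum the geometric series for $S_J$ using $r<1/2$, and reduce $m(\eta/S_J)^2$ to a constant times $m\eta^{1/r}$. Your handling of the divergence side condition is also fine; if $\sum_n n^{-1/2}\e_n(W)<\infty$ then $S_J$ stays bounded and one even gets the stronger rate $m^{-1/2}$, so the stated bound holds in that case as well.
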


We now derive from Theorem \ref{BT3} the following version of Corollary \ref{BC2}. 

\begin{Corollary}\label{BC3} Assume $\rho$, $W$ satisfy (\ref{B6}) and 
$$
\e_n(W)\le n^{-r}(\log (n+1))^b,\qquad r\in (0,1/2),\quad b\ge 0.
$$
Then there are three positive constants $C_i=C_i(M,r,b)$, $i=1,2$, $c=c(M,r,b)$ such that  for $m$, $\eta\le M$, satisfying $m \eta^{1/r} (\log (8M/\eta))^{-b/r} \ge C_1$ we have
\be\label{Co2}
\rho^m\{\bz:\sup_{f\in W} |L_\bz(f)|\ge \eta\} \le C_2 \exp(- cm\eta^{1/r} (\log (8M/\eta))^{-b/r} ).
\ee
\end{Corollary}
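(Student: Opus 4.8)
The plan is to invoke Theorem \ref{BT3} and to evaluate the two quantities $J$ and $S_J$ occurring in it for the decay rate in the hypothesis; this refines the computation behind Corollary \ref{BC2}. Set $\tilde\e_n:=n^{-r}(\log(n+1))^b$, so that $\e_n(W)\le\tilde\e_n$ and hence $W$ has a $2^n$-element $\tilde\e_n$-net in $\C(X)$ for every $n$. Since the entropy numbers enter Theorem \ref{BT3} only through the sizes of such nets, that theorem remains valid with $\e_n(W)$ replaced throughout by $\tilde\e_n$ (any resulting weakening of the bound is harmless, since we want an upper bound on the probability); its divergence hypothesis is met because $\sum_n n^{-1/2}\tilde\e_n=\sum_n n^{-1/2-r}(\log(n+1))^b=\infty$ when $r<1/2$. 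From now on $\e_n$ means $\tilde\e_n$, and the constants hidden in $\ll$ depend only on $r,b$.

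Fix $\eta\le M$ and put $t:=\log(8M/\eta)\ge\log 8>0$. By definition $J=J(\eta/M)$ is the least $j$ with $2^{-jr}(\log(2^j+1))^b\le\eta/(8M)$, so by minimality $2^{(J-1)r}<(8M/\eta)(\log(2^{J-1}+1))^b\le(8M/\eta)(J\log 2)^b$. When $b=0$ this already gives $J\ll t$; when $b>0$ it is an implicit inequality $J\le A+B\log J$ with $A\asymp t$ and $B\asymp 1$, whose solutions satisfy $J\ll A\ll t$ (the bootstrap being routine). Feeding $J\ll t$ back in yields
\be\label{skJ}
2^{Jr}\ll\frac{M}{\eta}\,t^{\,b}.
\ee

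Now I estimate $S_J=\sum_{j=1}^{J}2^{(j+1)/2}\e_{2^{j-1}}$. Using $\e_{2^{j-1}}\ll 2^{-(j-1)r}j^{\,b}$ and $1/2-r>0$, the sum is a geometric series carrying a polynomial factor, hence is controlled by its last term: $S_J\ll 2^{J(1/2-r)}J^{\,b}=\big(2^{Jr}\big)^{\frac1{2r}-1}J^{\,b}$. Inserting (\ref{skJ}), $J\ll t$, and $\frac1{2r}-1>0$ gives
$$
S_J\ll\Big(\frac{M}{\eta}\Big)^{\frac1{2r}-1}t^{\,b/(2r)},\qquad\text{so}\qquad m\Big(\frac{\eta}{S_J}\Big)^{2}\gg M^{\,2-\frac1r}\,m\,\eta^{1/r}\,(\log(8M/\eta))^{-b/r}.
$$
Consequently one may choose $C_1=C_1(M,r,b)$ so large that $m\eta^{1/r}(\log(8M/\eta))^{-b/r}\ge C_1$ forces $m(\eta/S_J)^2\ge 480M^2$; Theorem \ref{BT3} then gives $\rho^m\{\bz:\sup_{f\in W}|L_\bz(f)|\ge\eta\}\le C(M,\e)\exp(-c(M)m(\eta/S_J)^2)$, and bounding $m(\eta/S_J)^2$ from below once more by the displayed expression converts this into (\ref{Co2}) with $c=c(M,r,b)$ and $C_2=C(M,\e)=C(M,r,b)$ — after the reduction the net sizes, and hence that constant, depend only on $M,r,b$. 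This proves the corollary.

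The only step requiring genuine care is the estimate of $J$, and through it of $S_J$: one must check that the factor $(\log(2^j+1))^b$ in the definition of $J$ does not spoil the relation $J\asymp\log(8M/\eta)$, which comes down to solving the implicit inequality $J\le A+B\log J$. Once that is in hand, the geometric summation for $S_J$ and the matching of the powers of $\eta$ and of $\log(8M/\eta)$ are purely mechanical.
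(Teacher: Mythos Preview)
Your proof is correct and follows essentially the same route as the paper's: both invoke Theorem~\ref{BT3}, extract the bound $J\ll\log(8M/\eta)$ from the implicit inequality coming from $\e_{2^{J-1}}>\eta/(8M)$ (the paper isolates this step as a separate Lemma~\ref{BL1}, you do the bootstrap inline), and then control $S_J$ by the last term of the sum to reach the same estimate $S_J\ll(M/\eta)^{1/(2r)-1}(\log(8M/\eta))^{b/(2r)}$, from which $(\eta/S_J)^2\gg\eta^{1/r}(\log(8M/\eta))^{-b/r}$. Your explicit remark that one may replace $\e_n(W)$ by the majorant $\tilde\e_n$ throughout Theorem~\ref{BT3} is a point the paper leaves implicit, but the substance of the two arguments is the same.
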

\begin{proof} We begin with a simple technical lemma.
\begin{Lemma}\label{BL1} Let $r>0$, $b\ge 0$, and $A\ge 2$. Then for $n\in \N$ the inequality 
\be\label{B1}
2^{rn}n^{-b} \le A
\ee
implies inequalities
$$
n\le C_1(r,b)\log A,\qquad 2^n \le C_2(r,b)A^{1/r}(\log A)^{b/r}
$$
with some positive constants $C_i(r,b)$, $i=1,2$.
\end{Lemma}
\begin{proof} Set
$$
C_0(r,b) := \max_{n\in \N} n^b2^{-rn/2}.
$$
Then (\ref{B1}) implies
$$
2^{rn/2}  \le AC_0(r,b)\quad \text{and} \quad n\le C_1(r,b)\log A.
$$
This bound and (\ref{B1}) imply the required bound on $2^n$.
\end{proof}

We continue the proof of Corollary \ref{BC3}. Take $\eta \le M$. From the definition of $J$ in Theorem \ref{BT3} we obtain
$$
\e_{2^J}\le \eta/(8M)\quad \text{and}\quad \e_{2^{J-1}}> \eta/(8M).
$$
Therefore,
$$
2^{-r(J-1)}(J-1)^b > \eta/(8M) \quad \text{and}\quad 2^{r(J-1)}(J-1)^{-b} < 8M/\eta.
$$
By Lemma \ref{BL1} with $A=8M/\eta$ we obtain
\be\label{B2}
J-1\le C_1(r,b)\log (8M/\eta),\qquad 2^{J-1} \le C_2(r,b)(8M/\eta)^{1/r}(\log (8M/\eta))^{b/r}.
\ee
For $S_J$ we obtain the upper bound
$$
S_J= \sum_{j=1}^J2^{(j+1)/2}\e_{2^{j-1}} \le  \sum_{j=1}^J2^{(j+1)/2}2^{-r(j-1)}j^b   
$$ 
\be\label{B3}
\le C_3(r,b) 2^{(1/2-r)J}J^b \le C_4(r,b) (8M/\eta)^{(1/2-r)/r} (\log (8M/\eta))^{b/(2r)}.
\ee
For $m\ge 2$ set $\eta_m := am^{-r} (\log m)^b$. Bound (\ref{B3}) implies that there is a large enough $a=a(M,r,b)$ such that for $\eta\ge \eta_m$ the inequalities $m(\eta/S_J)^2 \ge 480M^2$ and $(\eta/S_J)^2 \ge C_5(M,r,b)\eta^{1/r} (\log (8M/\eta))^{-b/r}$
are satisfied. 

\end{proof}

Clearly, it is sufficient to prove Theorem \ref{IT1} for $m\ge m_0 = m_0(M,r,b)$.  The statement of Theorem \ref{IT1} follows from Corollary \ref{BC3}. Indeed, setting $\eta_m := am^{-r} (\log m)^b$ and choosing $a=a(M,r,b)$ large enough we satisfy the condition  
$m \eta_m^{1/r} (\log (8M/\eta_m))^{-b/r} \ge C_1$ of the Corollary \ref{BC3} and make the right hand side of (\ref{Co2}) less than 1. Choosing $m_0$ such that $\eta_m\le M$ for $m\ge m_0$, 
we apply Corollary \ref{BC3} and complete the proof. 

\section{Discretization for classes with small mixed smoothness}
\label{C}

We now proceed to applications of Theorem \ref{IT1} to classes of functions with mixed smoothness. 
 Classically, Sobolev classes of mixed smoothess were defined via $L_p$-boundedness of mixed weak derivatives in contrast to weak derivatives of order $r$ for the usual Sobolev classes. If $r \in \N_0$ and $1 \leq p \leq \infty$ we define $\bW^r_p$ as the set of all $L_p(\T^d)$ functions such that 
$$
	\sum\limits_{e \subset [d]} \Big\|\prod\limits_{i \in e}\Big(\frac{\partial^r}{\partial x_i^r}\Big)f\Big\|_p \leq 1\,.
$$
If $r$ is not an integer we use the following characterization of the classes $\bW^r_{p}$ in the standard in approximation theory and for us convenient way. See also \cite{DTU}, Ch.3, for further equivalent characterizations and connections to, e.g., Triebel-Lizorkin spaces. For $r > 0$   the functions
$$
F_r(x):=1+2\sum_{k=1}^{\infty}k^{-r}\cos(kx - r\pi/2)
$$
are called {\it Bernoulli kernels}.
 Let  
$$
F_r (\bx) := \prod_{j=1}^d F_r (x_j)
$$
be the multivariate analog of the Bernoulli kernel. 
We denote by $\widetilde{\bW}_{p}^r$ the class of functions
$f(\bx)$ representable in the form
$$
f(\bx) =\varphi(\bx)\ast F_r (\bx) :=
(2\pi)^{-d}\int_{\T^d}\varphi(\by)F_r (\bx-\by)d\by,
$$
where $\varphi\in L_p$ and $\|\varphi\|_p\le 1$.  It is well known that in case $1<p<\infty$ and $r\in \N$ these two characterizations ($\bW^r_p$ and $\widetilde{\bW}^r_p$) are equivalent in the sense that there are constants $A(r,p)>a(r,p)>0$ such that 
$$a\cdot \bW^r_p \subset \widetilde{\bW}^r_p \subset A\cdot \bW^r_p\,.$$ Therefore we just use one notation, namely $\bW^r_p$ in the sequel.     

We formulate some known results for the Kolmogorov widths: For a compact set $W \subset X$ of a Banach space $X$ define
$$
d_m(W,X) := \inf_{\{u_i\}_{i=1}^m\subset X}
\sup_{f\in W}
\inf_{c_i} \left \| f - \sum_{i=1}^{m}
c_i u_i \right\|_X,\quad m=1,2,\dots
$$
and
$$
d_0(W,X):= \sup_{f\in W}\|f\|_X.
$$
The following bound for the $ d_m(\bW^r_p,L_\infty)$ was obtained in \cite{VT184}.

\begin{Theorem}\label{CT1} Let $d\ge 2$, $2<p\le\infty$, and $1/p<r<1/2$. Then
$$
d_m(\bW^r_p,L_\infty) \ll m^{-r} (\log m)^{(d-1)(1-r)+r}.
$$
\end{Theorem}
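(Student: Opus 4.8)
The plan is to follow the classical recipe for upper bounds on Kolmogorov widths of mixed-smoothness classes: combine the Bernoulli-kernel representation $f=\varphi\ast F_r$, $\|\varphi\|_p\le 1$, with a dyadic block decomposition, the Nikol'skii inequality, and an approximating subspace assembled in two nontrivial zones plus a trivial tail. Write $A_{\bs}(f)$, $\bs\in\Z_+^d$, for the block of $f$ carrying the frequencies $\bk$ with $2^{s_j-1}\le|k_j|<2^{s_j}$ (with the usual change when $s_j=0$), and $\mathcal T(\rho(\bs))$ for the associated space of trigonometric polynomials, of dimension $\asymp 2^{\|\bs\|_1}$. Two block properties do the work. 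First, because the univariate dyadic piece of the Bernoulli kernel has $L_1$-norm $\asymp 2^{-rs}$ and $A_{\bs}(f)=\varphi\ast A_{\bs}(F_r)$, Young's inequality gives $\|A_{\bs}(f)\|_p\ll 2^{-r\|\bs\|_1}$ and, by Nikol'skii on a single block, $\|A_{\bs}(f)\|_\infty\ll 2^{(1/p-r)\|\bs\|_1}$. Second, for $2<p<\infty$ the Littlewood--Paley (Triebel--Lizorkin) characterization of $\bW^r_p$ together with the embedding into the Besov class $\mathbf B^r_{p,p}$ gives $\sum_{\bs}\bigl(2^{r\|\bs\|_1}\|A_{\bs}(f)\|_p\bigr)^p\ll 1$, hence, on a hyperbolic level, $\sum_{\|\bs\|_1=n}\|A_{\bs}(f)\|_p\ll n^{(d-1)(1-1/p)}2^{-rn}$ by H\"older over the $\asymp n^{d-1}$ blocks; the endpoint $p=\infty$ is treated the same way, with $\sup_{\bs}2^{r\|\bs\|_1}\|A_{\bs}(f)\|_\infty\ll 1$ replacing the summability.

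Next I would build the subspace. Fix $N$ with $2^N N^{d-1}\asymp m$, so $N\asymp\log m$ and $2^N\asymp m/(\log m)^{d-1}$, and put into the subspace all of $\mathcal T(\rho(\bs))$ for $\|\bs\|_1\le N$; this spends $\sum_{\|\bs\|_1\le N}\dim\mathcal T(\rho(\bs))\ll 2^N N^{d-1}\asymp m$ dimensions and annihilates every block with $\|\bs\|_1\le N$. For the transition levels $N<\|\bs\|_1\le N'$ (with $N'$ a fixed large multiple of $\log m$, chosen below) the blocks are not kept whole: inside each block one keeps only a low-dimensional near-best $L_\infty$-approximant, the dimension allotted to level $n$ decaying geometrically in $n-N$ so that the transition zone still costs $O(m)$ dimensions in total. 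Finally every block with $\|\bs\|_1>N'$ is discarded; by the Nikol'skii bound and the level-wise estimate above, the discarded part is at most $\sum_{n>N'}2^{(1/p-r)n}n^{(d-1)(1-1/p)}\ll 2^{(1/p-r)N'}(N')^{(d-1)(1-1/p)}$, which becomes negligible once $N'$ is a sufficiently large multiple of $\log m$ -- this is possible precisely because $r>1/p$.

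It then remains to sum the errors from the transition zone and optimise. The low zone and the far tail contribute negligibly; level $n$ of the transition zone contributes, after its reduced approximation, an error of the shape (reduced $L_\infty$-width factor of a single block) $\times\, n^{(d-1)(1-1/p)}2^{-rn}$, the second factor coming from the level-wise block bound. Summing the resulting geometric-type series from $n=N+1$ on and inserting $2^N\asymp m/(\log m)^{d-1}$ converts $2^{-rN}$ into $m^{-r}(\log m)^{(d-1)r}$, and the bookkeeping of the remaining logarithmic powers -- the $n^{d-1}$ blocks per level, the finite-dimensional width exponents, and the geometric sums -- collapses the logarithmic exponent to $(d-1)(1-r)+r$, which is exactly the asserted bound $m^{-r}(\log m)^{(d-1)(1-r)+r}$.

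The genuine difficulty sits in the transition zone. Simply keeping the blocks with $\|\bs\|_1\le N$ and discarding everything else yields only the weaker rate $m^{-(r-1/p)}$ up to logarithmic factors, so the entire improvement to the rate $m^{-r}$ rests on treating the transition levels by the finer reduced-dimension approximation rather than by discarding -- equivalently, on sharp Kolmogorov widths of finite-dimensional trigonometric-polynomial classes in the uniform norm, used in concert with the Besov ($\ell^p$-) structure of $\bW^r_p$ for $p>2$ -- and on the delicate allocation of the dimension budget across the transition levels. It is exactly here that the smallness hypothesis $r<1/2$ enters: it keeps the per-level optimisation in the regime whose logarithmic exponent is $(d-1)(1-r)+r$, whereas for $r\ge 1/2$ a different balance, and a different exponent, would result. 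Since the statement is an upper bound only (``$\ll$''), no matching lower bound is needed.
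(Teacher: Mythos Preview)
The paper does not prove Theorem~\ref{CT1}: the sentence immediately preceding it reads ``The following bound for the $d_m(\bW^r_p,L_\infty)$ was obtained in \cite{VT184},'' and the result is simply quoted. So there is no in-paper argument to compare against; the comparison has to be with the proof in \cite{VT184}.

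Your three-zone template --- full hyperbolic-cross core $\|\bs\|_1\le N$, thinned transition levels $N<\|\bs\|_1\le N'$, discarded far tail --- is indeed the scaffolding used in \cite{VT184} and in the earlier large-smoothness literature. The low zone and the tail are treated correctly in your sketch, and you are right that simply discarding everything beyond the core gives only the rate $m^{-(r-1/p)}$, so the whole gain to $m^{-r}$ must come from the transition zone.

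That is exactly where your proposal has a gap. You write that the level-$n$ error is ``(reduced $L_\infty$-width factor of a single block) $\times\, n^{(d-1)(1-1/p)}2^{-rn}$'' and that the logarithmic bookkeeping ``collapses'' to the exponent $(d-1)(1-r)+r$, but you never state (i) which finite-dimensional width estimate you are invoking for an $L_p$-ball of trigonometric polynomials in $L_\infty$, (ii) how the dimension budget is distributed among the $\asymp n^{d-1}$ blocks at each level, or (iii) how the Besov $\ell^p$ constraint across blocks is actually used --- the individual radii $\|A_{\bs}(f)\|_p$ are not all equal to $2^{-rn}$, only their $\ell^p$ aggregate is controlled, and the passage from the $\ell^p$ constraint to an $L_\infty$ error over a sum of blocks is the substantive step. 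In the small-smoothness window $1/p<r<1/2$ this optimisation is precisely the new content of \cite{VT184}; the exponent $(d-1)(1-r)+r$ is the \emph{outcome} of that calculation, not something one can assert without carrying it out. As it stands, your plan identifies the right architecture and the right location of the difficulty, but the transition-zone computation --- the part that distinguishes the small-smoothness regime from the classical case --- is a promise rather than a proof.
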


 We now turn our discussion to the classes $\bH^r_p$.
Let  $\btt =(t_1,\dots,t_d )$ and $\Delta_{\btt}^l f(\bx)$
be the mixed $l$-th difference with
step $t_j$ in the variable $x_j$, that is
$$
\Delta_{\btt}^l f(\bx) :=\Delta_{t_d,d}^l\dots\Delta_{t_1,1}^l
f(x_1,\dots ,x_d ) .
$$
Let $e$ be a subset of natural numbers in $[1,d ]$. We denote
$$
\Delta_{\btt}^l (e) =\prod_{j\in e}\Delta_{t_j,j}^l,\qquad
\Delta_{\btt}^l (\varnothing) = I .
$$
We define the class $\bH_{p,l}^r B$, $l > r$, as the set of
$f\in L_p$ such that for any $e$
\be\label{C1}
\bigl\|\Delta_{\btt}^l(e)f(\bx)\bigr\|_p\le B
\prod_{j\in e} |t_j |^r .
\ee
In the case $B=1$ we omit it. It is known (see, for instance, \cite{VTbookMA}, p.137) that the classes $\bH^r_{p,l}$ with different $l$ are equivalent. So, for convenience we fix one $l= [r]+1$ and omit $l$ from the notation. 
 The following bound for the $ d_m(\bH^r_p,L_\infty)$ was obtained in \cite{VT184}.

\begin{Theorem}\label{CT2} Let $d\ge 2$, $2<p\le\infty$, and $1/p<r<1/2$. Then
$$
d_m(\bH^r_p,L_\infty) \ll m^{-r} (\log m)^{d-1+r}.
$$
\end{Theorem}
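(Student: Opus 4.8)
\medskip
\noindent\textbf{Proof sketch.} The plan is to work with the dyadic-block description of the class. Let $\rho(\bs)$, $\bs\in\Z_+^d$, denote the standard dyadic block of frequencies, so that every $f\in L_1(\T^d)$ decomposes as $f=\sum_{\bs}\delta_\bs(f)$, where $\delta_\bs(f)$ is the projection of $f$ onto the trigonometric polynomials with spectrum in $\rho(\bs)$. It is classical (see \cite{VTbookMA}) that, up to constants, $f\in\bH^r_p$ is equivalent to $\|\delta_\bs(f)\|_p\ll 2^{-r|\bs|_1}$ for all $\bs$, with $|\bs|_1:=s_1+\dots+s_d$. Since $d_m$ is monotone in $m$, I would reduce the problem to producing, for each $n$, a subspace $V_n$ of dimension $m\asymp 2^n n^{d-1}$ with $\sup_{f\in\bH^r_p}\mathrm{dist}_{L_\infty}(f,V_n)\ll 2^{-rn}n^{(d-1)(1-r)+r}$; since then $n\asymp\log m$ and $2^n\asymp m(\log m)^{-(d-1)}$, this is precisely the stated bound rewritten in $m$.

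\medskip
Two elementary tools drive the construction. First, the Nikolskii inequality on a single block: for a trigonometric polynomial $t$ with spectrum in $\rho(\bs)$ one has $\|t\|_\infty\ll 2^{|\bs|_1/p}\|t\|_p$, hence $\|\delta_\bs(f)\|_\infty\ll 2^{-(r-1/p)|\bs|_1}$ for $f\in\bH^r_p$; here the hypothesis $r>1/p$ is exactly what makes this blockwise $L_\infty$ series summable and $\bH^r_p\hookrightarrow C(\T^d)$. Second, the layer $\{\bs:|\bs|_1=l\}$ contains $\asymp l^{d-1}$ blocks, of total dimension $\asymp 2^l l^{d-1}$, and these blocks have pairwise disjoint spectra, so one passes freely between $L_2$-orthogonality and $L_p$ bounds via Nikolskii. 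The naive subspace $V_n=\mathcal T\big(\bigcup_{|\bs|_1\le n}\rho(\bs)\big)$ with the tail bound $\big\|\sum_{|\bs|_1>n}\delta_\bs(f)\big\|_\infty\le\sum_{l>n}\#\{|\bs|_1=l\}\,2^{-(r-1/p)l}\ll n^{d-1}2^{-(r-1/p)n}$ will not suffice: rewritten in $m$ it is only $m^{-(r-1/p)}(\log m)^{(d-1)(1+1/p-r)}$, which carries the wrong power of $m$ ($r-1/p$ in place of $r$).

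\medskip
To recover the power $m^{-r}$ one must not discard the high layers outright but approximate them by low-dimensional subspaces, exploiting that across a layer the many almost-orthogonal blocks cannot all be simultaneously large at a given point. Concretely, I would fix a cut-off $L=L(n)$, keep the layers $|\bs|_1\le L$ in full, and for $l>L$ replace the layer sum $g_l:=\sum_{|\bs|_1=l}\delta_\bs(f)$ by its projection onto a carefully chosen subspace $V_l$ of dimension $k_l\ll 2^l l^{d-1}$, so that $\|g_l-P_{V_l}g_l\|_\infty$ is governed by a \emph{finite-dimensional} Kolmogorov width of $\ell_p$-type balls in $\ell_\infty$ (of Kashin--Gluskin type), not by the crude triangle inequality. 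Balancing the total dimension $\sum_{l\le L}2^l l^{d-1}+\sum_{l>L}k_l\asymp 2^n n^{d-1}$ against the error and taking $L\asymp n$ with a suitable geometric decay of the $k_l$ is designed to produce the exponent $n^{(d-1)(1-r)+r}$.

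\medskip
The step I expect to be the main obstacle is precisely this refined per-layer estimate: a sharp bound for the width $d_k$ in $L_\infty$ of the set $\big\{\sum_{j=1}^K t_j:\supp\hat t_j\subset\rho(\bs^j),\ \|t_j\|_p\le 2^{-rl}\big\}$, where $\bs^1,\dots,\bs^K$ with $K\asymp l^{d-1}$ enumerate the layer. Handling it requires combining the blockwise Nikolskii inequalities with finite-dimensional width results for octahedra and $\ell_p^N$-balls in $\ell_\infty^N$, together with a careful account of how the $\asymp l^{d-1}$ block-parameters distribute over the $k$ ``active'' coordinates; everything else -- the block characterisation of $\bH^r_p$, the Nikolskii inequalities and the parameter optimisation -- is routine bookkeeping. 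The same scheme, using instead the stronger summability $\sum_{|\bs|_1=l}\|\delta_\bs(f)\|_p\ll 2^{-rl}$ that holds on $\bW^r_p$, would give the companion estimate of Theorem~\ref{CT1}.
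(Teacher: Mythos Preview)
The paper does not prove this theorem: it is quoted as a known result from \cite{VT184} (Temlyakov--Ullrich), with no argument given here. There is therefore nothing in the present paper to compare your sketch against.

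That said, your outline matches the architecture of the proof in \cite{VT184}: keep the full hyperbolic cross for layers $|\bs|_1\le L$, approximate each tail layer by a low-dimensional subspace chosen via finite-dimensional Kolmogorov-width estimates of Kashin--Gluskin type for $\ell_p$-balls in $\ell_\infty$, and balance total dimension against error. Your parameter translation ($m\asymp 2^n n^{d-1}$, target error $2^{-rn}n^{(d-1)(1-r)+r}$) is correct and does reproduce the stated bound $m^{-r}(\log m)^{d-1+r}$. You have also correctly identified the genuine obstacle: the sharp $L_\infty$-width of a single hyperbolic layer is the technical heart of \cite{VT184} and cannot be obtained from Nikol'skii and bookkeeping alone. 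One inaccuracy in your closing remark: the gain for $\bW^r_p$ in Theorem~\ref{CT1} does \emph{not} come from an $\ell_1$-summability $\sum_{|\bs|_1=l}\|\delta_\bs(f)\|_p\ll 2^{-rl}$ --- that estimate fails in general for $\bW^r_p$ (even at $p=2$ one only gets an extra factor $l^{(d-1)/2}$ via Cauchy--Schwarz). The improvement is driven instead by the Littlewood--Paley square-function characterisation of $\bW^r_p$, which feeds a stronger input into the per-layer width estimate.
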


There are several general results, which give 
upper estimates
on the entropy numbers
$\e_k(F,X)$   in terms of the Kolmogorov widths $d_n(F,X)$.  Carl's (see \cite{Ca} and \cite{VTbook}, p.169, Theorem 3.23)
inequality states: For any $r>0$ we have
\begin{equation}\label{Carl}
\max_{1\le k \le n} k^r \e_k(F,X) \le C(r) \max _{1\le m \le n} m^r d_{m-1}(F,X).
\end{equation}
Inequality (\ref{Carl}) and Theorems \ref{CT1} and \ref{CT2} imply (see also \cite{VT185}) 
the following two theorems. 

\begin{Theorem}\label{CT3} Let $d\ge 2$, $2<p\le\infty$, and $1/p<r<1/2$. Then
$$
\e_k(\bW^r_p,L_\infty) \ll k^{-r} (\log (k+1))^{(d-1)(1-r)+r}.
$$
\end{Theorem}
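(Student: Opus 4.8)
The plan is to deduce the entropy bound directly from Carl's inequality (\ref{Carl}) together with the Kolmogorov width estimate of Theorem \ref{CT1}. I would apply (\ref{Carl}) with $F=\bW^r_p$ and $X=L_\infty$; this is legitimate since for $r>1/p$ the class $\bW^r_p$ is a compact subset of $C(\T^d)$. The inequality then reduces the theorem to an upper bound for $\max_{1\le m\le n} m^r d_{m-1}(\bW^r_p,L_\infty)$.

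First I would handle the first few indices separately: $d_0(\bW^r_p,L_\infty)=\sup_{f\in\bW^r_p}\|f\|_\infty$ and $d_1(\bW^r_p,L_\infty)$ are bounded by a constant depending only on $d,p,r$, again because $\bW^r_p$ is a bounded subset of $C(\T^d)$ for $r>1/p$. For $m\ge 3$, Theorem \ref{CT1} gives $d_{m-1}(\bW^r_p,L_\infty)\ll (m-1)^{-r}(\log(m-1))^{(d-1)(1-r)+r}$, so
$$
m^r d_{m-1}(\bW^r_p,L_\infty)\ll \Big(\tfrac{m}{m-1}\Big)^r (\log(m-1))^{(d-1)(1-r)+r}\ll (\log(m+1))^{(d-1)(1-r)+r},
$$
using that $(m/(m-1))^r$ is bounded and $\log(m-1)\asymp\log(m+1)$ for $m\ge 3$.

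Next I would note that the exponent $\beta:=(d-1)(1-r)+r$ is positive (since $d\ge 2$ and $0<r<1/2<1$), hence $(\log(m+1))^{\beta}$ is nondecreasing in $m$. Combining this with the bounded contributions from $m=1,2$, we obtain $\max_{1\le m\le n} m^r d_{m-1}(\bW^r_p,L_\infty)\ll (\log(n+1))^{\beta}$. Inserting this into (\ref{Carl}) yields $\max_{1\le k\le n} k^r\e_k(\bW^r_p,L_\infty)\ll (\log(n+1))^{\beta}$ for every $n$; taking $k=n$ and dividing by $n^r$ gives the claimed bound $\e_n(\bW^r_p,L_\infty)\ll n^{-r}(\log(n+1))^{\beta}$.

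I do not expect a genuine obstacle here — this is a routine deduction. The only points requiring (minor) care are the compactness and $L_\infty$-boundedness of $\bW^r_p$, needed both to invoke Carl's inequality and to control $d_0$ and $d_1$, and the monotonicity of the logarithmic factor, which is precisely what allows the maximum over $m\le n$ on the right-hand side of (\ref{Carl}) to be absorbed into its value at $m\asymp n$.
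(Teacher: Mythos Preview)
Your proposal is correct and follows exactly the route indicated in the paper, which simply states that Theorem \ref{CT3} follows from Carl's inequality (\ref{Carl}) combined with Theorem \ref{CT1}. You have merely spelled out the routine details (handling $d_0,d_1$ and the monotonicity of the logarithmic factor) that the paper leaves implicit.
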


\begin{Theorem}\label{CT4} Let $d\ge 2$, $2<p\le\infty$, and $1/p<r<1/2$. Then
$$
\e_k(\bH^r_p,L_\infty) \ll k^{-r} (\log (k+1))^{d-1+r}.
$$
\end{Theorem}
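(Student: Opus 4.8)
The plan is to obtain Theorem \ref{CT4} as an immediate consequence of Carl's inequality (\ref{Carl}) applied to the Kolmogorov width estimate of Theorem \ref{CT2}. First I would record the preliminary point that makes the quantities meaningful: since $r>1/p$, the embedding $\bH^r_p\hookrightarrow C(\T^d)$ is compact, so both the entropy numbers $\e_k(\bH^r_p,L_\infty)$ and the widths $d_m(\bH^r_p,L_\infty)$ are well defined, and in particular $d_0(\bH^r_p,L_\infty)=\sup_{f\in\bH^r_p}\|f\|_\infty<\infty$.

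Next I would apply inequality (\ref{Carl}) with the parameter taken equal to the smoothness index $r$, which is admissible because $r\in(0,1/2)\subset(0,\infty)$. This gives, for every $n\in\N$,
$$
\max_{1\le k\le n} k^r\,\e_k(\bH^r_p,L_\infty)\;\le\; C(r)\max_{1\le m\le n} m^r\, d_{m-1}(\bH^r_p,L_\infty).
$$
To estimate the right-hand side I would treat the term $m=1$ separately, where it equals the finite constant $d_0(\bH^r_p,L_\infty)$, and for $2\le m\le n$ insert the bound $d_{m-1}(\bH^r_p,L_\infty)\ll (m-1)^{-r}(\log(m+1))^{d-1+r}$ from Theorem \ref{CT2}. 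Since $m^r(m-1)^{-r}\le 2^r$ for $m\ge 2$ and $(\log(m+1))^{d-1+r}$ is nondecreasing, the maximum over $2\le m\le n$ is $\ll (\log(n+1))^{d-1+r}$; absorbing the constant from $m=1$, we conclude
$$
\max_{1\le k\le n} k^r\,\e_k(\bH^r_p,L_\infty)\;\ll\; (\log(n+1))^{d-1+r},\qquad n\in\N.
$$

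Finally, for a given $k$ I would specialize the last display to $n=k$, which yields $k^r\,\e_k(\bH^r_p,L_\infty)\ll(\log(k+1))^{d-1+r}$, and dividing by $k^r$ gives exactly the asserted bound. I do not anticipate any real obstacle: the entire analytic content sits in the width estimate of Theorem \ref{CT2}, and the remaining steps are the routine verification that Carl's inequality applies, together with the harmless bookkeeping of the $m=1$ term and of passing from $\log m$ to $\log(k+1)$ inside the maximum. (As noted in the excerpt, the statement is also recorded in \cite{VT185}.)
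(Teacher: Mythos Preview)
Your proposal is correct and follows exactly the paper's route: the paper obtains Theorem \ref{CT4} in one line by combining Carl's inequality (\ref{Carl}) with the Kolmogorov width bound of Theorem \ref{CT2}. Your write-up simply fills in the routine bookkeeping (the $m=1$ term, the passage from $\log m$ to $\log(k+1)$) that the paper leaves implicit.
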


We now combine Theorems \ref{CT3} and \ref{CT4} with Theorem \ref{IT1} in order to obtain
the corresponding discretization results. It is well known and easy to check that for $r>1/p$ there exists a constant $C(d,r,p)$ such that for all $f\in \bW^r_p$ and $f\in \bH^r_p$ we have
$\|f\|_\infty \le C(d,r,p)$. Therefore, Theorem \ref{CT3} and Theorem \ref{IT1} give Theorem \ref{CT5} and Theorem \ref{CT4} and Theorem \ref{IT1} give Theorem \ref{CT6}.

\section{A connection to numerical integration}
\label{E}

For a compact subset $W\subset \C(\Omega)$ define the best error of numerical integration with $m$ knots as follows
$$
\kappa_m(W) := \inf_{\xi^1,\dots,\xi^m;\lambda_1,\dots,\lambda_m} \sup_{f\in W}\left|\int_\Omega fd\mu - \sum_{j=1}^m \lambda_j f(\xi^j)\right|.
$$
For a set of points $\xi=\{\xi^j\}_{j=1}^m$ and a set of weights $\{\la_j\}_{j=1}^m$ define the cubature formula
$$
\La_m(f,\xi):= \sum_{j=1}^m \lambda_j f(\xi^j).
$$
Also, define the best error of numerical integration by Quasi-Monte Carlo methods with $m$ knots as follows
$$
\kappa_m^Q(W) := \inf_{\xi^1,\dots,\xi^m} \sup_{f\in W}\left|\int_\Omega fd\mu - \frac{1}{m} \sum_{j=1}^m  f(\xi^j)\right|.
$$
Obviously, $\kappa_m^Q(W)\ge \kappa_m(W)$. 

We begin with a very simple general observation on a connection between norm discretization and numerical integration (see \cite{VT171}). 

{\bf Quasi-algebra property.} We say that a function class $W$ has the quasi-algebra property (with a parameter $a$) if  there exists a constant $a$ such that for any $f,g\in W$ we have $fg/a \in W$.

The above property was introduced and studied in detail by H. Triebel. He introduced this property under the name {\it multiplication algebra}. Normally, the term {\it algebra} refers to 
the corresponding property with parameter $a=1$. To avoid any possible confusions we call 
it {\it quasi-algebra}. We refer the reader to the very resent book of Triebel \cite{Tr}, which contains results on the multiplication algebra (quasi-algebra) property for a broad range of
function spaces. 

We now formulate a simple statement from \cite{VT171}, which gives a connection between numerical integration and discretization of the $L_2$ norm. 

\begin{Proposition}\label{EP1} Suppose that a function class $W$ has the quasi-algebra property with a parameter $a$  and for any $f\in W$ we have for the complex conjugate function $\bar f\in W$. 
  Then for a cubature formula $\Lambda_m(\cdot,\xi)$ we have: For any $f\in W$
$$
|\|f\|_2^2 -\Lambda_m(|f|^2,\xi)| \le a\sup_{g\in W} \left|\int_\Omega gd\mu - \Lambda_m(g,\xi)\right|.
$$
\end{Proposition}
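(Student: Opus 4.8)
The plan is to use the quasi-algebra property to reduce the discretization error of $\|f\|_2^2$ directly to a numerical integration error for functions coming from the class $W$ itself, and then simply invoke the definition of $\kappa_m$-type quantities via a supremum. First I would fix an arbitrary $f\in W$. By hypothesis the complex conjugate $\bar f$ also belongs to $W$, and since $W$ has the quasi-algebra property with parameter $a$, the product $f\bar f/a = |f|^2/a$ again lies in $W$. This is the only place where the assumption $\bar f\in W$ is used, and it is exactly what makes the argument valid for complex-valued function classes.

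Next I would recall that $\|f\|_2^2 = \int_\Omega |f|^2\,d\mu$ and that the cubature functional $\Lambda_m(\cdot,\xi)$ is linear, hence homogeneous, in its argument, so that $\Lambda_m(|f|^2,\xi) = a\,\Lambda_m(|f|^2/a,\xi)$ and likewise $\int_\Omega |f|^2\,d\mu = a\int_\Omega (|f|^2/a)\,d\mu$. Factoring out $a$ gives
$$
\|f\|_2^2 - \Lambda_m(|f|^2,\xi) = a\left(\int_\Omega \frac{|f|^2}{a}\,d\mu - \Lambda_m\!\left(\frac{|f|^2}{a},\xi\right)\right).
$$

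Finally, since $g:=|f|^2/a\in W$, the absolute value of the right-hand side is at most $a\sup_{g\in W}\bigl|\int_\Omega g\,d\mu - \Lambda_m(g,\xi)\bigr|$; as the right-hand side no longer depends on $f$, taking the supremum over $f\in W$ on the left yields the claimed bound. There is essentially no obstacle here: the statement is a one-line consequence of the definitions once the membership $|f|^2/a\in W$ is observed, and the only points requiring any care are the homogeneity of $\Lambda_m$ and the fact that $|f|^2$ is a genuine element of the (quasi-)algebra.
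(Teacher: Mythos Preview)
Your argument is correct and is exactly the standard one-line proof: the quasi-algebra property together with $\bar f\in W$ gives $|f|^2/a\in W$, and linearity of both $\int\cdot\,d\mu$ and $\Lambda_m(\cdot,\xi)$ lets you pull out the factor $a$ and bound by the supremum. The paper does not supply its own proof here but simply cites \cite{VT171}, where the same direct argument is used.
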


The lower bound for discretization in terms of errors of numerical integration holds under very mild conditions on the class $W$ (see \cite{VT171}). We now proceed to the case of discretization of the $L_2$ norm. In this case it is convenient for us to consider real functions. Assume that a class of real functions $W\subset \C(\Omega)$ has the following extra property. 

{\bf Property A.} For any $f\in W$ we have $f^+ := (f+1)/2 \in W$ and $f^- := (f-1)/2 \in W$.

In particular, this property is satisfied if $W$ is a convex set containing functions $1$ and $-1$. The following result is from \cite{VT171}.

\begin{Theorem}\label{ET1} Suppose $W\subset \C(\Omega)$ has Property A. Then for any $m\in \N$ we have
$$
er_m^o(W,L_2) \ge \frac{1}{2}\kappa_m(W).
$$
\end{Theorem}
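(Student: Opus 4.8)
The plan is to prove the lower bound $er_m^o(W,L_2)\ge\frac12\kappa_m(W)$ by showing that any cubature formula that is good for $L_2$-norm discretization on $W$ automatically yields a cubature formula that is good for numerical integration on $W$, at the cost of a factor $2$. The mechanism is the algebraic identity relating a function to its shifts $f^\pm=(f\pm1)/2$: since $(f^+)^2-(f^-)^2=\big((f+1)/2\big)^2-\big((f-1)/2\big)^2 = f$, we can express the integral $\int_\Omega f\,d\mu$ and the sampled sum $\sum_j\la_j f(\xi^j)$ as differences of the corresponding quantities for $f^+$ and $f^-$.

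First I would fix a discretization set $\xi=\{\xi^j\}_{j=1}^m$ with weights $\{\la_j\}$, and for an arbitrary $f\in W$ introduce $g:=f^+$ and $h:=f^-$, which lie in $W$ by Property A. Writing $D(\phi):=\big|\,\|\phi\|_2^2-\sum_{j=1}^m\la_j\phi(\xi^j)^2\,\big|$ for the $L_2$-discretization defect, I would observe that $\|g\|_2^2-\|h\|_2^2=\int_\Omega\big(g^2-h^2\big)\,d\mu=\int_\Omega f\,d\mu$ and likewise $\sum_j\la_j\big(g(\xi^j)^2-h(\xi^j)^2\big)=\sum_j\la_j f(\xi^j)$. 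Subtracting, the triangle inequality gives
$$
\left|\int_\Omega f\,d\mu-\sum_{j=1}^m\la_j f(\xi^j)\right|\le D(g)+D(h)\le 2\sup_{\phi\in W}D(\phi).
$$
Taking the supremum over $f\in W$ on the left shows that the cubature formula $\La_m(\cdot,\xi)$ has numerical-integration error at most $2\sup_{\phi\in W}D(\phi)$, hence $\kappa_m(W)\le 2\sup_{\phi\in W}D(\phi)$. Finally, taking the infimum over all choices of $\xi$ and $\{\la_j\}$ on the right-hand side yields $\kappa_m(W)\le 2\,er_m^o(W,L_2)$, which is the claimed inequality.

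There is essentially no serious obstacle here; the only point that needs a moment of care is making sure the same weights $\{\la_j\}$ are used for the integration cubature formula as for the $L_2$-discretization formula, so that the linearity manipulation $\sum_j\la_j(g(\xi^j)^2-h(\xi^j)^2)=\sum_j\la_j f(\xi^j)$ is valid — this is why the argument produces a bound on $er_m^o$ (optimal weights allowed) rather than on $er_m$. One should also note that Property A guarantees $f^+,f^-\in W$ for \emph{every} $f\in W$, so the passage from $f$ to the pair $(g,h)$ stays inside the class and the supremum over $W$ on the right genuinely controls $D(g)$ and $D(h)$. The factor $\frac12$ in the statement is exactly the reciprocal of the factor $2$ coming from the two defect terms.
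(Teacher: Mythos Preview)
Your proof is correct. The paper does not give its own proof of Theorem~\ref{ET1} but cites it from \cite{VT171}; your polarization identity $(f^{+})^{2}-(f^{-})^{2}=f$ is exactly the standard argument, and the paper's remark that the same proof yields Theorem~\ref{ET1Q} (take $\la_j=1/m$ throughout, so the induced cubature formula is a Quasi-Monte Carlo rule and the left side becomes $\kappa_m^Q(W)$) confirms this is the intended approach.
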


We note that the proof of Theorem \ref{ET1} from \cite{VT171} gives the following version of 
Theorem \ref{ET1}.

\begin{Theorem}\label{ET1Q} Suppose $W\subset \C(\Omega)$ has Property A. Then for any $m\in \N$ we have
$$
er_m(W,L_2) \ge \frac{1}{2}\kappa_m^Q(W).
$$
\end{Theorem}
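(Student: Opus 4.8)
The plan is to mimic the proof of Theorem \ref{ET1} but to track the unweighted (Quasi-Monte Carlo) versions of both quantities. Recall the definitions: $er_m(W,L_2)$ uses only the equal-weight sum $\frac1m\sum_j|f(\xi^j)|^2$, while $\kappa_m^Q(W)$ uses only the equal-weight cubature $\frac1m\sum_j f(\xi^j)$. So the claim is the analogue of Theorem \ref{ET1} for these two equal-weight functionals. First I would fix any point set $\xi = \{\xi^j\}_{j=1}^m$ and show that its discretization error for the $L_2$ norm on $W$, namely $\sup_{f\in W}\big|\|f\|_2^2 - \frac1m\sum_{j=1}^m f(\xi^j)^2\big|$, is at least $\frac12$ times the QMC integration error $\sup_{g\in W}\big|\int_\Omega g\,d\mu - \frac1m\sum_{j=1}^m g(\xi^j)\big|$. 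Taking the infimum over $\xi$ on both sides then yields the theorem.

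The key algebraic step is the polarization-type identity that Property A is designed to supply. For $f\in W$ consider $f^+ = (f+1)/2$ and $f^- = (f-1)/2$, both of which lie in $W$ by Property A. Then
\be
(f^+)^2 - (f^-)^2 = \frac{(f+1)^2 - (f-1)^2}{4} = f .
\ee
Applying the functional $g \mapsto \int_\Omega g\,d\mu - \frac1m\sum_{j=1}^m g(\xi^j)$ — which is linear — to this identity, with $g = f$, I get
\be
\int_\Omega f\,d\mu - \frac1m\sum_{j=1}^m f(\xi^j)
= \Big(\|f^+\|_2^2 - \frac1m\sum_{j=1}^m f^+(\xi^j)^2\Big) - \Big(\|f^-\|_2^2 - \frac1m\sum_{j=1}^m f^-(\xi^j)^2\Big),
\ee
using that $\int_\Omega (f^\pm)^2 d\mu = \|f^\pm\|_2^2$. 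Each of the two bracketed terms is, in absolute value, at most $\sup_{h\in W}\big|\|h\|_2^2 - \frac1m\sum_{j=1}^m h(\xi^j)^2\big|$ since $f^+, f^- \in W$. Hence the left side is bounded by twice that supremum, i.e.
\be
\left|\int_\Omega f\,d\mu - \frac1m\sum_{j=1}^m f(\xi^j)\right| \le 2\sup_{h\in W}\left|\|h\|_2^2 - \frac1m\sum_{j=1}^m h(\xi^j)^2\right| .
\ee
Taking the supremum over $f\in W$ on the left gives $\kappa^Q$-error at $\xi$ $\le 2\cdot(\text{discretization error at }\xi)$, and taking $\inf_\xi$ gives $\kappa_m^Q(W) \le 2\, er_m(W,L_2)$, which is the assertion.

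I do not expect a serious obstacle here; the proof is essentially a bookkeeping exercise confirming that the argument behind Theorem \ref{ET1} never used the weights $\la_j$ in an essential way, so it specializes to the equal-weight setting. The one point deserving a word of care is that the passage from "for every fixed $\xi$" to the infimum is legitimate because the point set chosen for discretization is exactly the point set one then uses in the QMC cubature — no weights are introduced, so there is no mismatch between the classes of formulas being optimized over. This is precisely why the weighted statement (Theorem \ref{ET1}) must use $\kappa_m$ rather than $\kappa_m^Q$, while the unweighted statement uses $\kappa_m^Q$. With that observation the proof is complete.
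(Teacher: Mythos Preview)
Your proof is correct and is exactly the argument the paper has in mind: the paper does not spell out a separate proof but simply notes that the proof of Theorem~\ref{ET1} from \cite{VT171} yields this equal-weight version, and the polarization identity $(f^+)^2-(f^-)^2=f$ together with Property~A is precisely that argument. Your remark that no weights are ever introduced, so the same point set $\xi$ serves in both the discretization and the QMC cubature, is the only observation needed to pass from Theorem~\ref{ET1} to Theorem~\ref{ET1Q}.
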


Proposition \ref{EP1} and Theorems \ref{ET1} and \ref{ET1Q} imply the following general statement.

\begin{Theorem}\label{ET2} Suppose that a function class $W\subset \C(\Omega)$ of real functions has the quasi-algebra property with a parameter $a$ and   has Property A. Then for any $m\in \N$ we have
$$
 \frac{1}{2}\kappa_m(W) \le er_m^o(W,L_2) \le a \kappa_m(W)
$$
and
$$
 \frac{1}{2}\kappa_m^Q(W) \le er_m(W,L_2) \le a \kappa_m^Q(W).
$$

\end{Theorem}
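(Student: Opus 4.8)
The plan is simply to assemble the four results already established in this section. For the two lower bounds I would invoke Theorems \ref{ET1} and \ref{ET1Q} directly: since $W$ has Property A, Theorem \ref{ET1} gives $er_m^o(W,L_2)\ge \tfrac12\kappa_m(W)$ and Theorem \ref{ET1Q} gives $er_m(W,L_2)\ge \tfrac12\kappa_m^Q(W)$, which are precisely the left-hand inequalities in the two displayed chains. Nothing further is needed for this half.

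For the two upper bounds I would use Proposition \ref{EP1}. Since $W$ consists of real functions, the complex conjugate $\bar f=f$ lies in $W$ automatically, so the only hypothesis of Proposition \ref{EP1} that needs checking is the quasi-algebra property with parameter $a$, which is assumed. Moreover for real $f$ we have $|f|^2=f^2$, so $\Lambda_m(|f|^2,\xi)=\sum_{j=1}^m\lambda_j f(\xi^j)^2$. Fix a cubature formula $\Lambda_m(\cdot,\xi)$ with knots $\xi=\{\xi^j\}_{j=1}^m$ and weights $\{\lambda_j\}_{j=1}^m$. Proposition \ref{EP1} yields, for every $f\in W$,
$$
\left|\,\|f\|_2^2-\sum_{j=1}^m\lambda_j f(\xi^j)^2\right|\le a\sup_{g\in W}\left|\int_\Omega g\,d\mu-\Lambda_m(g,\xi)\right|.
$$
Taking the supremum over $f\in W$ on the left, and then the infimum over all admissible $(\xi,\lambda)$ with $m$ knots, gives $er_m^o(W,L_2)\le a\,\kappa_m(W)$. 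Running the same chain of estimates but with the weights frozen at $\lambda_j=1/m$, the right-hand side becomes $a$ times the Quasi-Monte Carlo integration error, and taking the infimum over $\xi$ yields $er_m(W,L_2)\le a\,\kappa_m^Q(W)$.

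I do not expect any genuine obstacle: the argument is a direct combination of Proposition \ref{EP1} with Theorems \ref{ET1} and \ref{ET1Q}. The only points deserving a word of care are (i) that the real-valuedness of $W$ makes the conjugate-closure hypothesis of Proposition \ref{EP1} vacuous and turns $|f|^2$ into $f^2$, matching the definition of $er_m^o$ and $er_m$; and (ii) that the Quasi-Monte Carlo statement is obtained by repeating the argument with equal weights $1/m$ rather than optimizing over the weights, so that $er_m^o$ is paired with $\kappa_m$ and $er_m$ with $\kappa_m^Q$.
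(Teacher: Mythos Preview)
Your proposal is correct and matches the paper's approach exactly: the paper simply states that Proposition~\ref{EP1} together with Theorems~\ref{ET1} and~\ref{ET1Q} imply Theorem~\ref{ET2}, and you have spelled out precisely this combination, including the observation that real-valuedness makes the conjugate hypothesis of Proposition~\ref{EP1} automatic and that fixing equal weights $\lambda_j=1/m$ yields the Quasi-Monte Carlo inequality.
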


Theorems \ref{IT1} and \ref{ET1Q} imply the following inequalities, which provide an upper bound for the errors of numerical integration in terms of the entropy numbers. 

\begin{Theorem}\label{ET3} Assume that a class of real functions   $W\subset \C(\Omega)$ has Property A and is such that for all $f\in W$ we have $\|f\|_\infty \le M$ with some constant $M$. Also assume that the entropy numbers of $W$ in the uniform norm $L_\infty$ satisfy the condition
$$
  \e_n(W) \le n^{-r} (\log (n+1))^b,\qquad r\in (0,1/2),\quad b\ge 0.
$$
Then
$$
\kappa_m(W)   \le \kappa_m^Q(W)   \le C(M,r,b)m^{-r} (\log (m+1))^b.
$$
\end{Theorem}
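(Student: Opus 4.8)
The plan is to obtain this as a direct corollary of Theorem \ref{IT1} and Theorem \ref{ET1Q}, so essentially no new analytic work is required beyond checking that the hypotheses line up. The left-hand inequality $\kappa_m(W)\le\kappa_m^Q(W)$ is the trivial one already noted above: a Quasi-Monte Carlo rule with knots $\xi^1,\dots,\xi^m$ is just the cubature formula with weights $\la_j=1/m$, so the infimum defining $\kappa_m(W)$ runs over a strictly larger family than the one defining $\kappa_m^Q(W)$.

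For the right-hand inequality I would argue as follows. By assumption $W$ is a class of real functions with Property A, so Theorem \ref{ET1Q} applies and gives
$$
er_m(W,L_2)\ge \tfrac12\,\kappa_m^Q(W),\qquad m\in\N.
$$
On the other hand, $W$ satisfies $\|f\|_\infty\le M$ for all $f\in W$ and $\e_n(W)\le n^{-r}(\log(n+1))^b$ with $r\in(0,1/2)$, $b\ge0$, which are exactly the hypotheses of Theorem \ref{IT1}; hence
$$
er_m(W)=er_m(W,L_2)\le C(M,r,b)\,m^{-r}(\log(m+1))^b,\qquad m\in\N.
$$
Chaining the two displays yields $\kappa_m^Q(W)\le 2C(M,r,b)\,m^{-r}(\log(m+1))^b$; absorbing the factor $2$ into a new constant (still depending only on $M,r,b$) gives the claimed bound, and combining with $\kappa_m(W)\le\kappa_m^Q(W)$ finishes the proof.

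There is no real obstacle here: all the substance sits in Theorem \ref{IT1} (derived from supervised learning theory) and in Theorem \ref{ET1Q} (the comparison of sampling discretization with Quasi-Monte Carlo numerical integration), and the present statement is merely their composition. The only thing that needs a moment of care is the bookkeeping — verifying that Property A is precisely the hypothesis under which Theorem \ref{ET1Q} is stated for real-valued classes, and that the constant furnished by Theorem \ref{IT1} depends only on $M,r,b$ so that the extra factor $2$ introduces no new dependence.
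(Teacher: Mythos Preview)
Your proposal is correct and matches the paper's own reasoning exactly: the paper introduces Theorem~\ref{ET3} with the sentence ``Theorems \ref{IT1} and \ref{ET1Q} imply the following inequalities,'' and gives no further argument. Your write-up simply spells out that chain (the trivial inequality $\kappa_m(W)\le\kappa_m^Q(W)$, then $er_m(W)\ge\tfrac12\kappa_m^Q(W)$ from Theorem~\ref{ET1Q}, then the bound on $er_m(W)$ from Theorem~\ref{IT1}) in the way the paper intends.
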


The following inequality was proved in \cite{No} (see also \cite{NoLN})
\be\label{E2}
\kappa_m(W) \le 2d_m(W,L_\infty),
\ee
where $d_m(W,L_\infty)$ is the Kolmogorov width of $W$ in the uniform norm $L_\infty$.

\section{Further results on discretization}
\label{F}

In this section we present some results for the $\bH$-classes defined in Section \ref{C}. It will be convenient for us to use a known representation theorem for the $\bH^r_p$ classes. 
 We need some classical trigonometric polynomials for our further argument (see \cite{Z} and \cite{VTbookMA}). We begin with the univariate case. 
 The Dirichlet kernel of order $j$:
$$
\mathcal D_j (x):= \sum_{|k|\le j}e^{ikx} = e^{-ijx} (e^{i(2j+1)x} - 1)
(e^{ix} - 1)^{-1} 
$$
$$
=\bigl(\sin (j + 1/2)x\bigr)\bigm/\sin (x/2)
$$
   is an even trigonometric polynomial.  The Fej\'er kernel of order $j - 1$:
$$
\mathcal K_{j} (x) := j^{-1}\sum_{k=0}^{j-1} \mathcal D_k (x) =
\sum_{|k|\le n} \bigl(1 - |k|/j\bigr) e^{ikx} 
$$
$$
=\bigl(\sin (jx/2)\bigr)^2\bigm /\bigl(j (\sin (x/2)\bigr)^2\bigr).
$$
The Fej\'er kernel is an even nonnegative trigonometric
polynomial in $\Tr(j-1)$.  It satisfies the obvious relations
\be\label{FKm}
\| \mathcal K_{j} \|_1 = 1, \qquad \| \mathcal K_{j} \|_{\infty} = j.
\ee
The de la Vall\'ee Poussin kernel
\be\label{A2}
\mathcal V_{j} (x) := j^{-1}\sum_{l=j}^{2j-1} \mathcal D_l (x)= 2\cK_{2j}(x)-\cK_j(x) 
\ee
is an even trigonometric
polynomial of order $2j - 1$.

Consider the following special univariate trigonometric polynomials. Let $s$ be a nonnegative integer. Define
$$
\cA_0(x) := 1,\quad \cA_1(x):= \cV_1(x)-1,\quad \cA_s(x):= \cV_{2^{s-1}}(x) - \cV_{2^{s-2}}(x), \quad s\ge 2,
$$
where $\cV_j$ are the de la Vall\'ee Poussin kernels defined above. Then $\cA_s(x)$ is a trigonometric polynomial of degree $2^s-1$. For $s\ge 2$ we have $\hat \cA_s(k) =0$ for 
$|k|\le 2^{s-2}$. Thus, for $s\ge 2$ the $\hat \cA_s(x)$ has nonzero Fourier coefficients for $2^{s-2} < |k|< 2^s$. 

In the multivariate case 
$\bx=(x_1,\dots,x_d)$ and $\bs=(s_1,\dots,s_d)\in \Z_+^d$ define
$$
\cA_\bs(\bx):= \cA_{s_1}(x_1)\cdots \cA_{s_d}(x_d).
$$
For $f\in L_1(\T^d)$ denote
$$
A_\bs(f)(\bx) := (f\ast \cA_s)(\bx) := (2\pi)^{-d} \int_{\T^d} f(\by)\cA_s(\bx-\by)d\by,
$$
where $\T^d := [0,2\pi)^d$.

It is known (see \cite{VTbookMA}, p.137) that an equivalent definition of the class $\bH^r_pB$ is as follows:
$$
\bH^r_pB :=\{f\in L_p(\T^d)\,:\, \|A_\bs\|_p \le B2^{-r\|\bs\|_1},\quad \bs \in \N_0^d\}.
$$

\begin{Proposition}\label{FP1} Let $r>1/p$. There exists $B=B(d,r,p)$ such that for any two functions $f,g \in \bH^r_p$ we have $fg \in \bH^r_pB$.
\end{Proposition}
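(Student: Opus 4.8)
The plan is to use the dyadic-block characterization of the $\bH$-classes recalled just above, namely that a function $h$ lies in $\bH^r_pB$ exactly when $\|A_\bs(h)\|_p\le B2^{-r\|\bs\|_1}$ for all $\bs\in\N_0^d$. So it suffices to produce $B=B(d,r,p)$ with $\|A_\bn(fg)\|_p\le B2^{-r\|\bn\|_1}$ for every $\bn\in\N_0^d$. Write $f_\bs:=A_\bs(f)$ and $g_\bt:=A_\bt(g)$. Since $r>1/p$, the Nikol'skii inequality applied to the trigonometric polynomial $f_\bs$ gives $\|f_\bs\|_\infty\le C(d,p)2^{\|\bs\|_1/p}\|f_\bs\|_p\le C(d,p)2^{-(r-1/p)\|\bs\|_1}$, so $\sum_\bs f_\bs$ converges absolutely and uniformly to $f$ (the partial sums over $\|\bs\|_\infty\le S$ are de la Vall\'ee Poussin means of $f$), and likewise for $g$. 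Hence $fg=\sum_{\bs,\bt}f_\bs g_\bt$ with uniform convergence, and since convolution with $\cA_\bn$ is bounded on $L_p$ with a constant $C(d)$ (because $\|\cA_s\|_1$ is bounded uniformly in $s$, by \eqref{FKm}), we may apply $A_\bn$ term by term, so that $\|A_\bn(fg)\|_p\le C(d)\sum_{\bs,\bt}\|A_\bn(f_\bs g_\bt)\|_p$.

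Next, a short inspection of Fourier supports localizes the sum. The coefficients of $f_\bs g_\bt$ vanish outside $\{\bk:|k_i|<2^{s_i}+2^{t_i}\ \forall i\}$, while for $n_i\ge2$ one has $\widehat{\cA_{n_i}}(k_i)=0$ whenever $|k_i|\le 2^{n_i-2}$; therefore $A_\bn(f_\bs g_\bt)=0$ unless $n_i\le\max(s_i,t_i)+2$ for every $i$ (for coordinates with $n_i\le1$ this is automatic). So the sum runs only over $(\bs,\bt)$ in $S_\bn:=\{(\bs,\bt):\ n_i\le\max(s_i,t_i)+2,\ i=1,\dots,d\}$.

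The heart of the argument is the estimate
$$
\|f_\bs g_\bt\|_p\ \le\ C(d,p)\,2^{-r\|\bm\|_1}\,2^{-(r-1/p)(\|\bs\|_1+\|\bt\|_1-\|\bm\|_1)},\qquad m_i:=\max(s_i,t_i).
$$
To get it, put $e:=\{i:s_i\ge t_i\}$ and split the variables into the groups indexed by $e$ and by $e^c$. H\"older's inequality in these two groups of variables gives
$$
\|f_\bs g_\bt\|_p\ \le\ \big\|\,\|f_\bs\|_{L_\infty(\T^{e^c})}\,\big\|_{L_p(\T^{e})}\cdot\big\|\,\|g_\bt\|_{L_p(\T^{e^c})}\,\big\|_{L_\infty(\T^{e})}.
$$
For the first factor, for each fixed value of the $e$-variables $f_\bs$ is a trigonometric polynomial of degree $<2^{s_i}$ in each $e^c$-variable, so Nikol'skii in the $e^c$-variables followed by integration over $\T^{e}$ yields $\le C\prod_{i\in e^c}2^{s_i/p}\,\|f_\bs\|_p\le C2^{\frac1p\sum_{i\in e^c}s_i}2^{-r\|\bs\|_1}$; symmetrically the second factor is $\le C2^{\frac1p\sum_{i\in e}t_i}2^{-r\|\bt\|_1}$. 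Since $s_i=m_i$ on $e$ and $t_i=m_i$ on $e^c$, multiplying and collecting exponents gives the displayed bound, the ``slack'' exponent $r-1/p>0$ being attached to $\sum_i\min(s_i,t_i)=\|\bs\|_1+\|\bt\|_1-\|\bm\|_1$. This variable-by-variable use of Nikol'skii is the step I expect to be the real obstacle: a single global application of H\"older would attach the weak exponent $r-1/p$ to the full $\|\bs\|_1$ or $\|\bt\|_1$ and would fail to give summability against $2^{-r\|\bn\|_1}$.

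It remains to sum. Put $\delta:=r-1/p>0$. Since $(s_i,t_i)\mapsto(\max(s_i,t_i),\min(s_i,t_i))$ is at most $2$-to-$1$ and on $S_\bn$ one has $m_i\ge n_i-2$ (so $m_i\ge(n_i-2)_+$) and $\min(s_i,t_i)\ge0$, the bounds above give
$$
\|A_\bn(fg)\|_p\ \le\ C(d,p)\,2^{d}\prod_{i=1}^{d}\Big(\sum_{m\ge (n_i-2)_+}2^{-rm}\Big)\Big(\sum_{\mu\ge0}2^{-\delta\mu}\Big)\ \le\ C(d,r,p)\,2^{-r\|\bn\|_1},
$$
where I used $\sum_{m\ge(n_i-2)_+}2^{-rm}\le C(r)2^{-rn_i}$ and $\sum_{\mu\ge0}2^{-\delta\mu}=C(\delta)<\infty$ (the convergence of the latter is exactly what $r>1/p$ buys). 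This is the desired inequality, with $B=B(d,r,p):=C(d,r,p)$, which completes the proof.
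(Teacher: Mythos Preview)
Your proof is correct and follows essentially the same route as the paper's: both use the dyadic block decomposition, the Fourier-support localization $A_\bn(f_\bs g_\bt)=0$ unless $\max(s_i,t_i)\ge n_i-2$, the coordinate-wise splitting via the set $e=\{i:s_i\ge t_i\}$, and Nikol'skii in mixed $(p,\infty)$-norms (the paper writes this as $\|A_\bu(f)\|_{\bp_e}\|A_\bv(g)\|_{\bp_{e^c}}$, you write it as the iterated H\"older inequality followed by Nikol'skii in the $e^c$- and $e$-variables respectively). The only cosmetic differences are that the paper first treats $d=1$ and then sketches the multivariate extension, whereas you go directly to general $d$, and that you organize the final summation by the change of variables $(s_i,t_i)\mapsto(\max,\min)$ rather than summing separately over the $2^d$ regions $D_e$; the substance is the same.
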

\begin{proof} We begin with the univariate case $d=1$. Assumption $r>1/p$ guarantees that for $f,g \in \bH^r_p$ we have absolutely convergent representations 
$$
f(x) = \sum_{s=0}^\infty A_s(f)(x),\qquad g(x) = \sum_{s=0}^\infty A_s(g)(x).
$$
Therefore,
$$
f(x)g(x) = \sum_{u=0}^\infty \sum_{v=0}^\infty A_u(f)(x)A_v(g)(x).
$$
Consider $A_s(fg)$. Let $u\ge v$. Then, it is clear that 
$$
A_s(A_u(f)A_v(g)) =0 \quad \text{if}\quad 2^u + 2^v \le 2^{s-2}.
$$
We now split the summation over $u,v$ into two regions $D_1: \{u,v\in N_0: u\ge v\}$ and
$D_2: \{u,v\in N_0: u< v\}$. Then we have
$$
\Sigma_1:=\left\|A_s\left(\sum_{u,v\in D_1} A_u(f)A_v(g)\right)\right\|_p =\left\|\left(\sum_{u=\max(s-3,0)}^\infty\sum_{0\le v\le u} A_s(A_u(f)A_v(g))\right)\right\|_p
$$
$$
\le 6 \sum_{u=\max(s-3,0)}^\infty\sum_{0\le v\le u} \|A_u(f)A_v(g)\|_p \le 6 \sum_{u=\max(s-3,0)}^\infty\sum_{0\le v\le u} \|A_u(f)\|_p\|A_v(g)\|_\infty.
$$
Using the definition of the class $\bH^r_p$ and the Nikol'skii inequality we obtain 
\be\label{F3}
\|A_u(f)\|_p \le 2^{-ru},\qquad \|A_v(g)\|_\infty \le C2^{v/p}\|A_v(g)\|_p \le C2^{-(r-1/p)v}.
\ee
Thus, we obtain
$$
\Sigma_1 \le C'2^{-rs}.
$$
In the same way we bound the sum $\Sigma_2$ and complete the proof in the univariate case. 

In the multivariate case we do the same coordinate wise. If for some $j$ we have 
$2^{u_j} + 2^{v_j} \le 2^{s_j-2}$ then $A_\bs(A_\bu(f)A_\bv(g)) =0$. We now split the summation over $2^d$ regions, where either $u_j\ge v_j$ or $u_j<v_j$, $j=1,\dots,d$. Let $e$ be a subset 
of $\{1,2,\dots,d\}$ and $e^c:=\{1,2,\dots,d\}\setminus e$. Denote $\bp_e =(p_1,\dots,p_d)$ with $p_j=p$ for $j\in e$ and $p_j=\infty$ 
otherwise. Let 
$$
D_e:= \{\bu,\bv \,:\, u_j\ge v_j, \, j\in e, \, u_j<v_j \, \text{otherwise}\}.
$$
Then we bound $\|A_\bs(A_\bu(f)A_\bv(g))\|_p$ as follows
$$
\|A_\bs(A_\bu(f)A_\bv(g))\|_p \le 6^d\| A_\bu(f)A_\bv(g)\|_p \le 6^d \|A_\bu(f)\|_{\bp_e}\|A_\bv(g)\|_{\bp_{e^c}}.
$$
We use the Nikol'skii vector norm inequalities (see \cite{VTbookMA}, p.90) for each $\|A_\bu(f)\|_{\bp_e}$ and $\|A_\bv(g)\|_{\bp_{e^c}}$. 
Arguing as above we obtain that for all $\bs\in\N_0^d$ we have
$$
\|A_\bs(fg)\|_p \le C(d,r,p)2^{-r\|\bs\|_1},
$$
which completes the proof. 

\end{proof}

We now demonstrate how known results on the errors of numerical integration provide optimal rates of decay of the quantities $er_m^o(\bH^r_p,L_2)$ and $er_m(\bH^r_p,L_2)$. We will use 
Theorems \ref{ET1}, \ref{ET2} and Proposition \ref{EP1}. Our argument follows the one from
\cite{VT171}, where the corresponding results were obtained for the $\bW^r_2$ and $\bE^r$ classes. The following relation is known: Let $1\le p\le \infty$ and $r>1/p$. Then
\be\label{F4}
\kappa_m(\bH^r_p) \asymp m^{-r} (\log m)^{d-1}.
\ee
The lower bounds in (\ref{F4}) were obtained in \cite{Bakh4} (see also the book \cite{VTbookMA}, p.268, Theorem 6.4.8). The upper bounds in (\ref{F4}) were obtained in \cite{Du1} (for $r>1$ see also the book \cite{VTbookMA}, p.299, Theorem 6.7.13).

Bounds (\ref{F4}) Theorem \ref{ET2} and Proposition \ref{FP1} give Theorem \ref{FT1} from 
Introduction.

We now give one result for the $er_m(\bH^r_\infty,L_2)$. We recall the concept of the Korobov cubature formulas. Let $m\in\N$, $\mathbf a := (a_1,\dots,a_d)$, $a_1,\dots,a_d\in\Z$.
We consider the cubature formulas
$$
P_m (f,\mathbf a):= m^{-1}\sum_{\mu=1}^{m}f\left (2\pi\left \{\frac{\mu a_1}
{m}\right\},\dots,2\pi\left \{\frac{\mu a_d}{m}\right\}\right),
$$
which are called the {\it Korobov cubature formulas}. For a function class $W$ denote
$$
P_m(W,\ba) := \sup_{f\in W} |P_m(f,\ba)-\hat f(\mathbf 0)|.
$$
The following result is known (see \cite{VTbookMA}, p.288, Theorem 6.6.5): Let $0 < r < 1$. There is a vector $\mathbf a$
such that
\be\label{F5}
P_m (\bH_{\infty}^r ,\mathbf a)\ll m^{-r} (\log m)^{d-1}.
\ee

Bounds (\ref{F4}), (\ref{F5}), Theorem \ref{ET2} and Proposition \ref{FP1} give the following result.

\begin{Theorem}\label{FT2} Let $0 < r < 1$. Then
$$
  er_m(\bH^r_\infty,L_2) \asymp m^{-r} (\log m)^{d-1}.
$$
\end{Theorem}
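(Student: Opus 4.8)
The plan is to obtain both the upper and the lower bound in $er_m(\bH^r_\infty,L_2)\asymp m^{-r}(\log m)^{d-1}$ by combining Theorem~\ref{ET2} with the known numerical integration estimates (\ref{F4}) and (\ref{F5}); once the hypotheses of Theorem~\ref{ET2} are checked for $W=\bH^r_\infty$, no further computation is required. The one genuinely substantive ingredient, the quasi-algebra property, is already provided by Proposition~\ref{FP1}.

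First I would verify that $\bH^r_\infty$, regarded as a class of real functions on $\Omega=\T^d$, satisfies the hypotheses of Theorem~\ref{ET2}. Since here the condition $r>1/p$ reduces to $r>0$, every $f\in\bH^r_\infty$ obeys a H\"older-type condition and hence is continuous, so $\bH^r_\infty\subset\C(\T^d)$. The defining inequalities (\ref{C1}) involve only the norm $\|\cdot\|_p$, so $\bH^r_\infty$ is a convex symmetric set containing the constants $1$ and $-1$; by the remark following Property~A it therefore has Property~A. Proposition~\ref{FP1} with $p=\infty$ provides a constant $a=B(d,r,\infty)$ such that $fg/a\in\bH^r_\infty$ for all $f,g\in\bH^r_\infty$, i.e. the quasi-algebra property with parameter $a$. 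Theorem~\ref{ET2} then gives
$$
\frac12\,\kappa_m^Q(\bH^r_\infty)\ \le\ er_m(\bH^r_\infty,L_2)\ \le\ a\,\kappa_m^Q(\bH^r_\infty),
$$
so the theorem reduces to establishing $\kappa_m^Q(\bH^r_\infty)\asymp m^{-r}(\log m)^{d-1}$.

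For the upper bound on $\kappa_m^Q$ I would use the Korobov cubature formulas $P_m(\cdot,\ba)$: they have equal weights $1/m$, and since $\hat f(\mathbf 0)=(2\pi)^{-d}\int_{\T^d}f\,d\bx=\int_{\T^d}f\,d\mu$ the quantity $P_m(\bH^r_\infty,\ba)$ measures exactly the worst-case error of a Quasi-Monte Carlo rule (repeated knots being permitted in the definition of $\kappa_m^Q$). Hence $\kappa_m^Q(\bH^r_\infty)\le P_m(\bH^r_\infty,\ba)$; choosing $\ba$ as in (\ref{F5}) yields $\kappa_m^Q(\bH^r_\infty)\ll m^{-r}(\log m)^{d-1}$ for $0<r<1$. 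For the matching lower bound, $\kappa_m^Q\ge\kappa_m$ by definition, and the lower bound in (\ref{F4}), valid for all $r>1/p=0$, gives $\kappa_m(\bH^r_\infty)\gg m^{-r}(\log m)^{d-1}$. Feeding both into the displayed sandwich completes the proof.

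I do not anticipate a real obstacle: the proof is an assembly of Theorem~\ref{ET2}, Proposition~\ref{FP1}, and the cited estimates (\ref{F4})--(\ref{F5}). The only points needing attention are (i) seeing that the Korobov formula legitimately bounds $\kappa_m^Q$, which is true because integration against $\mu$ equals evaluation of the zeroth Fourier coefficient, and (ii) applying Proposition~\ref{FP1} at the endpoint $p=\infty$; the harder step, namely proving that $\bH^r_\infty$ is a quasi-algebra, has already been carried out there.
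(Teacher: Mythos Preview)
Your proposal is correct and follows exactly the route the paper takes: it invokes Proposition~\ref{FP1} for the quasi-algebra property and checks Property~A so that Theorem~\ref{ET2} applies, then uses~(\ref{F5}) (the Korobov Quasi-Monte Carlo rule) for the upper bound on $\kappa_m^Q(\bH^r_\infty)$ and the lower bound in~(\ref{F4}) together with $\kappa_m^Q\ge\kappa_m$ for the lower bound. The only additions you make are explicit verifications (continuity, convexity/symmetry containing $\pm1$, equal weights in $P_m(\cdot,\ba)$) that the paper leaves implicit.
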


In the case $d=2$ the results are complete. They are based on the Fibonacci cubature formulas, which we introduce momentarily. Let $\{b_n\}_{n=0}^{\infty}$, $b_0=b_1 =1$, $b_n = b_{n-1}+b_{n-2}$,
$n\ge 2$, -- be the Fibonacci numbers.  For the continuous functions of two
variables, which are $2\pi$-periodic in each variable, we define
cubature formulas
$$
\Phi_n(f) :=b_n^{-1}\sum_{\mu=1}^{b_n}f\bigl(2\pi\mu/b_n,
2\pi\{\mu b_{n-1} /b_n \}\bigr),
$$
 called the {\it Fibonacci cubature formulas}. The following result is known (see \cite{VTbookMA}, p.281, Theorem 6.5.8): Let $1\le p\le \infty$ and $r>1/p$. Then for $d=2$
 \be\label{F6}
 \sup_{f\in \bH^r_p}\left|(2\pi)^{-2}\int_{\T^2} f(\bx)d\bx - \Phi_n(f)\right| \asymp b_n^{-r}\log b_n.
 \ee
 
 Bounds (\ref{F4}), (\ref{F6}), Theorem \ref{ET2} and Proposition \ref{FP1} give the following result.

\begin{Theorem}\label{FT3} Let $1\le p\le \infty$ and $r>1/p$. Then for $d=2$
$$
  er_{b_n}(\bH^r_p,L_2) \asymp b_n^{-r} \log b_n.
$$
\end{Theorem}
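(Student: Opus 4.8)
The plan is to derive Theorem~\ref{FT3} by combining the quasi-algebra machinery (Proposition~\ref{FP1} and Theorem~\ref{ET2}) with the known sharp error bounds (\ref{F6}) for the Fibonacci cubature formulas, exactly in the spirit of how Theorems~\ref{FT1} and~\ref{FT2} were obtained. First I would verify that the class $W=\bH^r_p$ with $d=2$ and $r>1/p$ meets the hypotheses of Theorem~\ref{ET2}: Proposition~\ref{FP1} gives the quasi-algebra property with parameter $B=B(2,r,p)$, and since $\bH^r_p$ is a convex, symmetric set containing the constant functions $1$ and $-1$ (the constant $1$ has all mixed differences $\Delta^l_\btt(e)1=0$ for $e\neq\varnothing$, so (\ref{C1}) holds trivially), Property~A is satisfied. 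Hence Theorem~\ref{ET2} applies and yields
$$
\tfrac12\kappa^Q_m(\bH^r_p)\le er_m(\bH^r_p,L_2)\le B\,\kappa^Q_m(\bH^r_p),\qquad m\in\N,
$$
so the problem reduces to pinning down $\kappa^Q_m(\bH^r_p)$ at the Fibonacci values $m=b_n$.

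For the upper bound I would observe that the Fibonacci cubature formula $\Phi_n$ is a Quasi-Monte Carlo rule: it uses $b_n$ points with equal weights $b_n^{-1}$. Therefore $\kappa^Q_{b_n}(\bH^r_p)\le \sup_{f\in\bH^r_p}|(2\pi)^{-2}\int_{\T^2}f\,d\bx-\Phi_n(f)|$, and (\ref{F6}) bounds this by $C\,b_n^{-r}\log b_n$. Plugging into the right inequality of Theorem~\ref{ET2} gives $er_{b_n}(\bH^r_p,L_2)\le B\,C\,b_n^{-r}\log b_n \ll b_n^{-r}\log b_n$.

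For the matching lower bound I would use the left inequality of Theorem~\ref{ET2}: $er_{b_n}(\bH^r_p,L_2)\ge\tfrac12\kappa^Q_{b_n}(\bH^r_p)\ge\tfrac12\kappa_{b_n}(\bH^r_p)$. Now (\ref{F4}) gives $\kappa_{b_n}(\bH^r_p)\asymp b_n^{-r}(\log b_n)^{d-1}=b_n^{-r}\log b_n$ when $d=2$, so $er_{b_n}(\bH^r_p,L_2)\gg b_n^{-r}\log b_n$. Combining the two bounds gives $er_{b_n}(\bH^r_p,L_2)\asymp b_n^{-r}\log b_n$, as claimed. One can optionally remark that in $d=2$ the lower bound from general $\kappa_m$ and the upper bound from the explicit Fibonacci rule coincide, which is precisely why the two-dimensional case is "complete."

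The argument is essentially a bookkeeping assembly of results already in hand, so there is no deep obstacle; the only point requiring a little care is making sure the lower bound is stated against $\kappa^Q$ rather than $\kappa$ — but since $\kappa^Q_m\ge\kappa_m$ always, dropping down to $\kappa_{b_n}(\bH^r_p)$ and invoking (\ref{F4}) is legitimate and loses nothing in the $d=2$ case because the sharp order $b_n^{-r}\log b_n$ is already achieved by the (Quasi-Monte Carlo) Fibonacci rule in (\ref{F6}). A secondary minor check is that the constant $B$ from Proposition~\ref{FP1} and the factor $\tfrac12$ from Theorem~\ref{ET2} are absolute in $m$, which they are, so they are absorbed into the $\ll,\gg$ notation.
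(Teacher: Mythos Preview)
Your proposal is correct and follows exactly the paper's own route: the paper's proof is the single sentence ``Bounds (\ref{F4}), (\ref{F6}), Theorem \ref{ET2} and Proposition \ref{FP1} give the following result,'' and you have simply spelled out the details of that assembly (verifying Property~A, noting that $\Phi_n$ is a QMC rule so (\ref{F6}) controls $\kappa^Q_{b_n}$, and passing from $\kappa^Q$ down to $\kappa$ for the lower bound via (\ref{F4})). There is no difference in approach.
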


\section{Discussion}
\label{Disc}

We begin our discussion with Theorem \ref{ET3}. This theorem under mild conditions on 
the function class $W$ guarantees that a certain decay of the entropy numbers:
\be\label{D1}
  \e_n(W) \le n^{-r} (\log (n+1))^b,\qquad r\in (0,1/2),\quad b\ge 0, \quad n\in \N
\ee
implies the same decay of the optimal errors of numerical integration
\be\label{D2}
\kappa_m(W)\le \kappa_m^Q(W)   \le C(M,r,b)m^{-r} (\log (m+1))^b,\quad m\in \N.
\ee
First of all, we point out that a single inequality (\ref{D1}) with $b=0$ (for simplicity) for an $n\in \N$ implies by the Hoeffding's inequality (see below) that for $m\asymp n^{1+2r}$ there exists a set of points $\{\xi^j\}_{j=1}^m$ such that we have for all $f\in W$
$$
\left|\int_\Omega fd\mu - \frac{1}{m} \sum_{j=1}^m  f(\xi^j)\right| \ll n^{-r} \asymp m^{-r/(1+2r)},
$$
 which is much weaker than (\ref{D2}).

Let $\Omega$ be a compact subset of $\R^d$ and $\mu$ be a probability measure on  $\Omega$.    Define $\Omega^m:= \Omega\times\cdots\times\Omega$ and $\mu^m:=\mu \times\cdots\times \mu$. For $\bx^j\in \Omega$ denote $\bz:= (\bx^1,\dots,\bx^m)\in \Omega^m$. Consider a real function $f\in \cC(\Omega)$.
Under the condition $\|f\|_\infty \le M$ the Hoeffding's inequality (see, for instance, \cite{VTbook}, p.197) gives
\be\label{D3}
\mu^m\left\{\bz: \left|\int_\Omega fd\mu - \frac{1}{m}\sum_{j=1}^m f(\bx^j)\right|\ge \eta\right\}\le 2\exp\left(-\frac{m\eta^2}{8M^2}\right).
\ee

Second, we recall that the following inequality was proved in \cite{No} (see also \cite{NoLN})
\be\label{D4}
\kappa_m(W) \le 2d_m(W,L_\infty)
\ee
where $d_m(W,L_\infty)$ is the Kolmogorov width of $W$ in the uniform norm $L_\infty$.
Inequality (\ref{D4}) is a very nice inequality. However, it gives an upper bound for the 
$\kappa_m(W)$, which is smaller than $\kappa_m^Q(W)$ in terms of the $d_m(W,L_\infty)$, which are larger (in the sense of Carl's inequality (see (\ref{Carl})) than the entropy numbers.

Third, we point out that the Carl's inequality (\ref{Carl})  and Theorem \ref{ET3} imply the following result.

\begin{Theorem}\label{DT1} Assume that a class of real functions   $W\subset \C(\Omega)$ has Property A and is such that for all $f\in W$ we have $\|f\|_\infty \le M$ with some constant $M$. Also assume that the Kolmogorov widths of $W$ in the uniform norm $L_\infty$ satisfy the condition
$$
  d_n(W,L_\infty) \le n^{-r} (\log (n+1))^b,\qquad r\in (0,1/2),\quad b\ge 0.
$$
Then
$$
\kappa_m(W)   \le \kappa_m^Q(W)   \le C(M,r,b)m^{-r} (\log (m+1))^b.
$$
\end{Theorem}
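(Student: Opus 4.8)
The plan is to deduce Theorem~\ref{DT1} from Theorem~\ref{ET3} by trading the hypothesis on the Kolmogorov widths for the hypothesis on the entropy numbers, with Carl's inequality (\ref{Carl}) serving as the bridge; this is exactly the route already used to pass from Theorems~\ref{CT1}--\ref{CT2} to Theorems~\ref{CT3}--\ref{CT4}. First I would dispose of the trivial half of the claim: $\kappa_m(W)\le\kappa_m^Q(W)$ holds because a Quasi-Monte Carlo rule is a cubature formula with the particular weights $\lambda_j=1/m$, so only the bound on $\kappa_m^Q(W)$ needs work.

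Next I would estimate the right-hand side of Carl's inequality. Since $W$ is a compact subset of $\C(\Omega)$, (\ref{Carl}) applies with $F=W$, $X=L_\infty$. For $2\le m\le n$ the hypothesis $d_{m-1}(W,L_\infty)\le (m-1)^{-r}(\log m)^b$ gives
$$
m^r d_{m-1}(W,L_\infty)\le \Big(\tfrac{m}{m-1}\Big)^r(\log m)^b\le 2^r(\log(n+1))^b,
$$
because $(m/(m-1))^r$ is bounded by $2^r$ for $m\ge 2$; and for $m=1$ one has $d_0(W,L_\infty)=\sup_{f\in W}\|f\|_\infty\le M\le C(M,b)(\log(n+1))^b$ for all $n\ge 1$. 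Hence $\max_{1\le m\le n}m^r d_{m-1}(W,L_\infty)\le C(M,r,b)(\log(n+1))^b$, and Carl's inequality specialized to $k=n$ yields
$$
\e_n(W)=\e_n(W,L_\infty)\le C(M,r,b)\,n^{-r}(\log(n+1))^b,\qquad n\in\N.
$$

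Finally I would feed this estimate into Theorem~\ref{ET3}. The only point that requires a moment's attention is the multiplicative constant $C(M,r,b)$ in front of the entropy bound, whereas Theorem~\ref{ET3} is stated with the normalized decay $\e_n(W)\le n^{-r}(\log(n+1))^b$. I would resolve this by noting that the proof of Theorem~\ref{ET3} runs through Theorem~\ref{IT1} and Corollary~\ref{BC3}, and an extra factor $C_0$ in the entropy decay enters those arguments only through the quantity $A=8MC_0/\eta$ in Lemma~\ref{BL1} (still $\ge 2$ once $\eta$ is small), so it is simply absorbed into the final constant; equivalently, one may restate Theorem~\ref{ET3} with a constant in the hypothesis at no extra cost. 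With that remark in place, Theorem~\ref{ET3} applied to $W$ gives $\kappa_m^Q(W)\le C(M,r,b)m^{-r}(\log(m+1))^b$, and combined with $\kappa_m(W)\le\kappa_m^Q(W)$ this is the assertion. The argument is essentially bookkeeping; the constant-absorption step is the only place where one must be slightly careful, and there is no substantive obstacle.
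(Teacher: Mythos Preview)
Your proposal is correct and follows exactly the route the paper indicates: the paper's own proof of Theorem~\ref{DT1} is the one-line remark that Carl's inequality (\ref{Carl}) together with Theorem~\ref{ET3} yields the result. Your additional care in tracking the extra multiplicative constant from Carl's inequality (and noting it is absorbed in the proof of Theorem~\ref{ET3} via Lemma~\ref{BL1} and Corollary~\ref{BC3}) is a legitimate refinement of a detail the paper leaves implicit.
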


The sampling discretization errors $er_m(W,L_q)$ and $er_m^o(W,L_q)$ are new asymptotic characteristics of a function class $W$. It is natural to try to compare these characteristics with other classical asymptotic characteristics. Theorem \ref{IT1} addresses this issue. It is known that the sequence of entropy numbers is one of the smallest sequences of asymptotic characteristics of a class. For instance, by Carl's inequality (see \cite{Ca} and (\ref{Carl}) above) it is dominated, in a certain sense,
by the sequence of the Kolmogorov widths. Theorem \ref{IT1} shows that the sequence $\{\e_n(W)\}$ dominates, in a certain sense, the sequence $\{er_m(W)\}$. Clearly, alike the Carl's inequality, one tries to prove the corresponding relations in as general situation as possible. 
We derive Theorem \ref{IT1} from known results in learning theory. Our proof is a probabilistic 
one.  
We impose the restriction $r<1/2$ in Theorem \ref{IT1} because the probabilistic technique from the supervised learning theory, which was used in the proof of Theorem \ref{IT1} (see Section \ref{B}), has a natural limitation to $r\le 1/2$. As we pointed out in \cite{VT171} in case $b=0$, it would be interesting to understand if Theorem \ref{IT1} holds for $r\ge 1/2$. Similarly, it would be interesting to understand if it is possible to extend Theorems \ref{ET3} and \ref{DT1} to the case of $r\ge 1/2$. 
We point out that Theorem \ref{IT1} gives an upper bound for the quantity $er_m(W)$, which is 
a larger (in general) one than the optimized over weights quantity $er_m^o(W)$.

\end{document}